\documentclass{article}%
\usepackage{amsfonts}
\usepackage{graphicx}
\usepackage{amsmath}%
\setcounter{MaxMatrixCols}{30}%
\usepackage{amssymb}
\providecommand{\U}[1]{\protect\rule{.1in}{.1in}}
\newtheorem{theorem}{Theorem}

\newtheorem{corollary}[theorem]{Corollary}

\newtheorem{definition}[theorem]{Definition}

\newtheorem{lemma}[theorem]{Lemma}
\newtheorem{notation}[theorem]{Notation}

\newtheorem{proposition}[theorem]{Proposition}
\newtheorem{remark}[theorem]{Remark}

\newenvironment{proof}[1][Proof]{\textbf{#1.} }{\ \rule{0.5em}{0.5em}}
\begin{document}

\title{Synthetic Differential Geometry\\of \\Chen's Iterated Integrals}
\author{Hirokazu NISHIMURA\\Institute of Mathematics, University of Tsukuba\\Tsukuba, Ibaraki, 305-8571\\JAPAN}
\maketitle

\begin{abstract}
Chen's iterated integrals are treated within synthetic differential geometry.
The main result is that iterated integrals produce a subcomplex of the de Rham
complex on the free path space as well as based path spaces..

\end{abstract}

\section{\label{s1}Introduction}

Chen's iterated integrals have been introduced and investigated in
\cite{chen1}, \cite{chen2}, \cite{chen3} and others. As far as we know, there
is only one volume on them, namely, \cite{koh}, though it is unfortunately
written not in English but in Japanese. Chen has established two fundamental
theorems on them, the first claiming that the cohomology of the bar complex
$\mathcal{B}^{\ast}\left(  M\right)  $\ generated by Chen's iterated integrals
on the loop space $\Omega M$ of a smooth manifold $M$ is no other than the
cohomology of the loop space $\Omega M$, so long as $M$ is simply connected,
while the second asserting that
\[
\mathcal{F}^{-k}H^{0}\left(  \mathcal{B}^{\ast}\left(  M\right)  \right)
\cong\mathrm{Hom}\left(  \mathbf{Z}\pi_{1}\left(  M,x_{0}\right)
/J^{k+1},\mathbf{R}\right)
\]
where $\mathcal{F}^{-k}$ is the filtration determined by Chen's iterated
integrals of length $k$ or less, $\mathbf{Z}\pi_{1}\left(  M,x_{0}\right)  $
denotes the group ring of the fundamental group $\pi_{1}\left(  M,x_{0}%
\right)  $, and $J$ stands for the kernel of the augmentation mapping
$\mathbf{Z}\pi_{1}\left(  M,x_{0}\right)  \rightarrow\mathbf{Z}$. Chen's
iterated integrals have been applied successfully to various branches of
mathematics, say, to Vassiliev invariants of knots and braids (\cite{koh'} and
\cite{kon}), algebraic cycles (\cite{ha}) and multiple zeta functions (
\cite{dri} and \cite{te}).

The principal objective in this paper is to establish that Chen's iterated
integrals yield a subcomplex of the de Rham complex on the free path space
$\mathcal{P}M$\ as well as the path space $\mathcal{P}_{x_{1}}M$\ with the
starting point based, the path space $\mathcal{P}^{x_{2}}M$\ with the
terminating point based \ and the path space $\mathcal{P}_{x_{1}}^{x_{2}}%
M$\ with both the starting and terminating points based within our favorite
framework of synthetic differential geometry (\cite{ko} and \cite{la}). Chen's
two fundamental theorems will be dealt with synthetically in subsequent papers.

Even if $M$\ is a finite-dimensional smooth manifold, the free path space
$\mathcal{P}M$\ as well as the path space $\mathcal{P}_{x_{1}}M$\ with the
starting point based, the path space $\mathcal{P}^{x_{2}}M$\ with the
terminating point based \ and the path space $\mathcal{P}_{x_{1}}^{x_{2}}%
M$\ with both the starting and terminating points based\ is generally
infinite-dimensional in essence, so that we must choose an adequate framework
for \textit{infinite-dimensional} differential geometry in order to deal with
these spaces. Chen has chosen so-called diffeology, in which plots enable one
to do constructions and computations with coordinates. For an excellent volume
on diffeology, one is referred to \cite{ig}. Since we have chosen synthetic
differential geometry and we assume $M$\ only to be a microlinear space, we
are coerced into doing everything in a coordinate-free way.

In synthetic differential geometry, one has to do everything within an
esoteric topos, which alienates many mathematicians. However we could
emancipate synthetic differential geometry from topos theory, resulting in
\textit{axiomatic differential geometry}, in which Weil functors play a
pivotal role. For the first steps in axiomatic differential geometry, one is
referred to \cite{ni1}, \cite{ni2} and \cite{ni3}. For an excellent
investigation on the relationship among a few infinite-dimensional
differential geometries (including diffeology) from a standpoint of category
theory, one is referred to \cite{st}.

\section{\label{s2}Preliminaries}

The reader is referred to \cite{ko} and \cite{la} for synthetic differential
geometry. In particular, the reader is assumed to be familiar with the first
four chapters of \cite{la}.

\begin{notation}
We denote by $M$ an arbitrary microlinear space.
\end{notation}

\begin{notation}
We denote by $I$ the unit interval $\left[  0,1\right]  $.
\end{notation}

\begin{remark}
As is discussed in \S 3.2 of \cite{la}, vector fields on $M$\ can be viewed
from three related but distinct standpoints. The first is to see them
orthodoxically as mappings $M\rightarrow M^{D}$ (sections of tangent bundles),
the second is to put them down at mappings $D\times M\rightarrow M$
(infinitesimal flow), and the third is, most radically, to regard them as
mappings $D\rightarrow M^{M}$ (infinitesimal transformation), though we prefer
the third viewpoint most.
\end{remark}

\begin{notation}
We denote by $\mathcal{A}^{p}\left(  M\right)  $ the totality of differential
forms on $M$\ of degree $p$, $\mathcal{A}\left(  M\right)  $ designating the
totality of differential forms on $M$.
\end{notation}

\begin{notation}
We denote by $\mathbf{d}$\ the exterior differentiation. Given a vector field
$X$ on $M$, we denote by $\mathbf{i}_{X}$ and $\mathbf{L}_{X}$ the interior
product and the Lie derivative with respect to the vector field $X$\ respectively.
\end{notation}

\begin{remark}
We have a natural pairing
\[
\left\langle \left(  \gamma;d_{1},...,d_{p}\right)  ,\omega\right\rangle
=d_{1}...d_{p}\omega\left(  \gamma\right)
\]
for any $\left(  \gamma;d_{1},...,d_{p}\right)  \in M^{D^{p}}\times D^{p}%
$\ and any $\omega\in\mathcal{A}^{p}\left(  M\right)  $. Indeed, differential
forms can be characterized as mappings on $M^{D^{p}}\times D^{p}$ abiding by
certain properties, for which the reader is referred to Proposition 2 in
\S 4.2 of \cite{la}.
\end{remark}

The following is one of the three Cartan formulas for differential forms and
will be used in our discussions.

\begin{theorem}
\label{t2.2}Given a vector field $X$ on $M$, we have
\[
\mathbf{L}_{X}=\mathbf{di}_{X}+\mathbf{di}_{X}%
\]

\end{theorem}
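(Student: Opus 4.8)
The plan is to prove the homotopy (magic) formula $\mathbf{L}_{X}=\mathbf{di}_{X}+\mathbf{i}_{X}\mathbf{d}$ --- the displayed right-hand side being evidently a misprint for this --- by a direct synthetic computation: I would evaluate both operators on an arbitrary $p$-form $\omega$ and an arbitrary microcube $\gamma\in M^{D^{p}}$ and check that the two resulting scalars coincide. Because $M$ is assumed only to be microlinear, I cannot invoke the classical shortcut that reduces Cartan's formula to its effect on $0$-forms by a graded-derivation argument: with no local coordinates at hand, the de Rham complex of a bare microlinear space need not be generated over its functions by exact $1$-forms. Hence everything must be done on microcubes, using the three operations in their preferred infinitesimal guise from Chapter 4 of \cite{la}: the vector field as an infinitesimal transformation $X\colon D\to M^{M}$ with $X(0)=\mathrm{id}_{M}$ and $X_{e}:=X(e)$; the Lie derivative through the flow, $\omega(X_{e}\circ\gamma)=\omega(\gamma)+e\,(\mathbf{L}_{X}\omega)(\gamma)$; the interior product through prepending the flow direction, $(\mathbf{i}_{X}\omega)(\delta)=\omega(\widehat{\delta})$ with $\widehat{\delta}(e,\vec{d})=X_{e}(\delta(\vec{d}))$; and the exterior derivative as the alternating sum of the directional coefficients of $\omega$ along the reduced cubes of a microcube.

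The computational device is the $(p+1)$-microcube swept out by the flow, $\Gamma\colon D\times D^{p}\to M$, $\Gamma(e,d_{1},\dots,d_{p})=X_{e}(\gamma(d_{1},\dots,d_{p}))$. By the very definition of the interior product, $(\mathbf{i}_{X}\mathbf{d}\omega)(\gamma)=(\mathbf{d}\omega)(\Gamma)$, so I would expand $(\mathbf{d}\omega)(\Gamma)$ by the synthetic formula for $\mathbf{d}$, splitting the sum into the flow direction $e$ (the first slot) and the $p$ spatial directions. The flow term is, by definition of $\mathbf{L}_{X}$, the coefficient of $e$ in $\omega(X_{e}\circ\gamma)$, that is, exactly $(\mathbf{L}_{X}\omega)(\gamma)$. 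Each of the remaining $p$ terms is the coefficient, as one frozen spatial parameter varies, of $\omega$ on the reduced $p$-cube of $\Gamma$; that reduced cube is precisely the flow-prepending of the corresponding reduced $(p-1)$-cube of $\gamma$, so $\omega$ of it equals $(\mathbf{i}_{X}\omega)$ of that reduced cube. This identifies the $p$ spatial terms, slot by slot, with the terms of $\mathbf{d}(\mathbf{i}_{X}\omega)(\gamma)$ up to a uniform sign, whence $(\mathbf{d}\omega)(\Gamma)=(\mathbf{L}_{X}\omega)(\gamma)-\mathbf{d}(\mathbf{i}_{X}\omega)(\gamma)$. Rearranging and using $(\mathbf{d}\omega)(\Gamma)=(\mathbf{i}_{X}\mathbf{d}\omega)(\gamma)$ yields $(\mathbf{L}_{X}\omega)(\gamma)=(\mathbf{d}\mathbf{i}_{X}\omega)(\gamma)+(\mathbf{i}_{X}\mathbf{d}\omega)(\gamma)$; since $\omega$ and $\gamma$ were arbitrary, the operator identity follows.

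The main obstacle I anticipate is not the algebra of the cancellation but keeping the bookkeeping honest. First, one must fix a single convention for the synthetic exterior derivative --- which direction is pulled to the front, and with which sign $(-1)^{i+1}$ --- and apply it consistently both to $(\mathbf{d}\omega)(\Gamma)$, where the flow direction already occupies the first slot, and to $\mathbf{d}(\mathbf{i}_{X}\omega)(\gamma)$, where the flow direction is reinstated by $\mathbf{i}_{X}$; the slot-by-slot matching and the overall sign both hinge on this consistency together with the alternating property of $\omega$. Second, and more fundamentally, the synthetic $\mathbf{d}$-formula must genuinely define a differential form and be independent of the presentation of the microcube, and the flow-swept $\Gamma$ must be recombined from the separate infinitesimals $e,d_{1},\dots,d_{p}$; both are exactly where microlinearity of $M$ is indispensable, and I would quote the relevant quasi-colimit diagrams from \cite{la} rather than manipulate the infinitesimals naively.
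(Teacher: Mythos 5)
You are right that the displayed identity is a misprint for $\mathbf{L}_{X}=\mathbf{d}\mathbf{i}_{X}+\mathbf{i}_{X}\mathbf{d}$. Note, however, that the paper contains no proof of this theorem against which to compare yours: it is stated as one of the three Cartan formulas and quoted as background from \cite{la}, whose first four chapters the reader is assumed to know, and the text moves directly on to Section 3. Your reconstruction is correct, and it is in substance the standard synthetic argument found in that reference: writing $\Gamma(e,d_{1},\ldots,d_{p})=X_{e}(\gamma(d_{1},\ldots,d_{p}))$, one has $(\mathbf{i}_{X}\mathbf{d}\omega)(\gamma)=(\mathbf{d}\omega)(\Gamma)$ by the definition of contraction; in the alternating-sum expansion of $(\mathbf{d}\omega)(\Gamma)$ the flow slot contributes $(\mathbf{L}_{X}\omega)(\gamma)$, while the reduced cube obtained by freezing a spatial slot of $\Gamma$ is exactly the flow-prepending of the corresponding reduced cube of $\gamma$, so those $p$ terms assemble, the index shift reversing every sign, to $-(\mathbf{d}\mathbf{i}_{X}\omega)(\gamma)$; rearranging gives the formula. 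Your two caveats --- fixing a single sign convention for $\mathbf{d}$ and using it consistently in both expansions, and invoking microlinearity of $M$ (via the quasi-colimit arguments of \cite{la}) to ensure that $\mathbf{d}$, $\mathbf{i}_{X}$, $\mathbf{L}_{X}$ and the recombined cube $\Gamma$ are well defined --- are precisely where the residual work lies, and deferring those verifications to \cite{la} is consonant with the paper, which defers the entire statement there.
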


\section{\label{s3}Simple Integrals}

\begin{notation}
We denote by $\mathcal{P}M$\ the set
\[
\mathcal{P}M=M^{I}%
\]

\end{notation}

\begin{notation}
We denote by $\varphi$\ the mapping $\varphi:I\times\mathcal{P}M\rightarrow M$
defined by
\[
\varphi\left(  t,\theta\right)  =\theta\left(  t\right)
\]
for any $\left(  t,\theta\right)  \in I\times\mathcal{P}M$.
\end{notation}

\begin{notation}
Given $t\in I$, we denote by $\iota_{t}:\mathcal{P}M\rightarrow$
$I\times\mathcal{P}M$ the mapping
\[
\theta\in\mathcal{P}M\mapsto\left(  t,\theta\right)  \in I\times\mathcal{P}M
\]

\end{notation}

\begin{notation}
Given $t\in I$, we denote by $\varphi_{t}$ the mapping
\[
\varphi\circ\iota_{t}:\mathcal{P}M\rightarrow M
\]

\end{notation}

\begin{notation}
We denote by $\frac{\partial}{\partial t}$ the vector field
\[
\left(  d,\left(  t,\theta\right)  \right)  \in D\times\left(  I\times
\mathcal{P}M\right)  \mapsto\left(  t+d,\theta\right)  \in I\times\mathcal{P}M
\]
on $I\times\mathcal{P}M$.
\end{notation}

\begin{notation}
Given $\omega\in\mathcal{A}^{p}\left(  M\right)  $\ with $p$\ being a positive
integer and $t\in I$, we write
\[
\left(  \omega\right)  _{t}^{\ast}\in\mathcal{A}^{p-1}\left(  \mathcal{P}%
M\right)
\]
for
\[
\iota_{t}^{\ast}\mathbf{i}_{\frac{\partial}{\partial t}}\varphi^{\ast}\omega
\]

\end{notation}

\begin{proposition}
\label{t3.1}Given $\omega_{1}\in\mathcal{A}^{p_{1}}\left(  M\right)  $ and
$\omega_{2}\in\mathcal{A}^{p_{2}}\left(  M\right)  $ with $p_{1}$ and $p_{2}$
being positive integers, we have
\[
\left(  \omega_{1}\wedge\omega_{2}\right)  _{t}^{\ast}=\left(  \omega
_{1}\right)  _{t}^{\ast}\wedge\varphi_{t}^{\ast}\omega_{2}+\left(  -1\right)
^{p_{1}}\varphi_{t}^{\ast}\omega_{1}\wedge\left(  \omega_{2}\right)
_{t}^{\ast}%
\]
for any $t\in I$.
\end{proposition}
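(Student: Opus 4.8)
The plan is to unwind the definition $\left(\omega\right)_{t}^{\ast}=\iota_{t}^{\ast}\mathbf{i}_{\frac{\partial}{\partial t}}\varphi^{\ast}\omega$ and to push the wedge product $\omega_{1}\wedge\omega_{2}$ through each of the three operators $\varphi^{\ast}$, $\mathbf{i}_{\frac{\partial}{\partial t}}$, and $\iota_{t}^{\ast}$ in turn, invoking at each stage the appropriate multiplicative or derivation property. All three properties I shall need — that pullback of differential forms respects $\wedge$, that the interior product $\mathbf{i}_{X}$ is an antiderivation of degree $-1$, and that pullback is contravariantly functorial — are standard facts about differential forms, available in the microlinear setting as developed in the first four chapters of \cite{la}.

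First I would apply the fact that $\varphi^{\ast}$ is a ring homomorphism for $\wedge$, so that $\varphi^{\ast}\left(\omega_{1}\wedge\omega_{2}\right)=\varphi^{\ast}\omega_{1}\wedge\varphi^{\ast}\omega_{2}$ as a form of degree $p_{1}+p_{2}$ on $I\times\mathcal{P}M$. Next, since $\varphi^{\ast}\omega_{1}$ has degree $p_{1}$, the antiderivation property of $\mathbf{i}_{\frac{\partial}{\partial t}}$ yields
\[
\mathbf{i}_{\frac{\partial}{\partial t}}\left(\varphi^{\ast}\omega_{1}\wedge\varphi^{\ast}\omega_{2}\right)=\left(\mathbf{i}_{\frac{\partial}{\partial t}}\varphi^{\ast}\omega_{1}\right)\wedge\varphi^{\ast}\omega_{2}+\left(-1\right)^{p_{1}}\varphi^{\ast}\omega_{1}\wedge\left(\mathbf{i}_{\frac{\partial}{\partial t}}\varphi^{\ast}\omega_{2}\right).
\]
It then remains to apply $\iota_{t}^{\ast}$, which again commutes with $\wedge$, to both summands. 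The crucial simplification is the functoriality identity $\iota_{t}^{\ast}\varphi^{\ast}=\left(\varphi\circ\iota_{t}\right)^{\ast}=\varphi_{t}^{\ast}$, which holds by the very definition of $\varphi_{t}$: applying it to the factors untouched by the interior product converts $\iota_{t}^{\ast}\varphi^{\ast}\omega_{2}$ into $\varphi_{t}^{\ast}\omega_{2}$ and $\iota_{t}^{\ast}\varphi^{\ast}\omega_{1}$ into $\varphi_{t}^{\ast}\omega_{1}$, while the definition of $\left(\cdot\right)_{t}^{\ast}$ converts the remaining factors $\iota_{t}^{\ast}\mathbf{i}_{\frac{\partial}{\partial t}}\varphi^{\ast}\omega_{1}$ and $\iota_{t}^{\ast}\mathbf{i}_{\frac{\partial}{\partial t}}\varphi^{\ast}\omega_{2}$ into $\left(\omega_{1}\right)_{t}^{\ast}$ and $\left(\omega_{2}\right)_{t}^{\ast}$. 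Assembling the two summands produces exactly the claimed formula.

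I do not anticipate a genuine obstacle here: the argument is a formal manipulation once the three algebraic properties are in hand, and the assumption that $p_{1}$ and $p_{2}$ are positive integers rules out any degenerate degree-zero cases. The only point demanding slight care is the bookkeeping of signs and degrees — the antiderivation sign $\left(-1\right)^{p_{1}}$ must be attached to the term in which $\omega_{1}$ survives the interior product untouched, and one should confirm that $\mathbf{i}_{\frac{\partial}{\partial t}}$ lowers degree by one so that the output correctly lands in $\mathcal{A}^{p_{1}+p_{2}-1}\left(\mathcal{P}M\right)$.
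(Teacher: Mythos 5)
Your proposal is correct and follows essentially the same route as the paper's own proof: unwind the definition $\left(\cdot\right)_{t}^{\ast}=\iota_{t}^{\ast}\mathbf{i}_{\frac{\partial}{\partial t}}\varphi^{\ast}$, use that $\varphi^{\ast}$ and $\iota_{t}^{\ast}$ respect $\wedge$, apply the antiderivation property of $\mathbf{i}_{\frac{\partial}{\partial t}}$ with the sign $\left(-1\right)^{p_{1}}$ on the term where $\omega_{1}$ is untouched, and conclude via $\iota_{t}^{\ast}\varphi^{\ast}=\varphi_{t}^{\ast}$. Your sign bookkeeping and degree accounting match the paper exactly.
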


\begin{proof}
We have
\begin{align*}
& \left(  \omega_{1}\wedge\omega_{2}\right)  _{t}^{\ast}\\
& =\iota_{t}^{\ast}\mathbf{i}_{\frac{\partial}{\partial t}}\varphi^{\ast
}\left(  \omega_{1}\wedge\omega_{2}\right) \\
& =\iota_{t}^{\ast}\mathbf{i}_{\frac{\partial}{\partial t}}\left(
\varphi^{\ast}\omega_{1}\wedge\varphi^{\ast}\omega_{2}\right) \\
& =\iota_{t}^{\ast}\left(  \mathbf{i}_{\frac{\partial}{\partial t}}%
\varphi^{\ast}\omega_{1}\wedge\varphi^{\ast}\omega_{2}+\left(  -1\right)
^{p_{1}}\varphi^{\ast}\omega_{1}\wedge\mathbf{i}_{\frac{\partial}{\partial t}%
}\varphi^{\ast}\omega_{2}\right) \\
& =\iota_{t}^{\ast}\mathbf{i}_{\frac{\partial}{\partial t}}\varphi^{\ast
}\omega_{1}\wedge\iota_{t}^{\ast}\varphi^{\ast}\omega_{2}+\left(  -1\right)
^{p_{1}}\iota_{t}^{\ast}\varphi^{\ast}\omega_{1}\wedge\iota_{t}^{\ast
}\mathbf{i}_{\frac{\partial}{\partial t}}\varphi^{\ast}\omega_{2}\\
& =\left(  \omega_{1}\right)  _{t}^{\ast}\wedge\varphi_{t}^{\ast}\omega
_{2}+\left(  -1\right)  ^{p_{1}}\varphi_{t}^{\ast}\omega_{1}\wedge\left(
\omega_{2}\right)  _{t}^{\ast}%
\end{align*}

\end{proof}

\begin{definition}
Given a mapping $\widetilde{\omega}:I\rightarrow\mathcal{A}^{p}\left(
\mathcal{P}M\right)  $\ with $p$\ being a natural number and $s,t\in I$, we
define
\[
\int_{s}^{t}\widetilde{\omega}\left(  u\right)  \mathbf{d}u\in\mathcal{A}%
^{p}\left(  \mathcal{P}M\right)
\]
to be such that
\begin{align*}
& \left\langle \left(  \gamma;d_{1},...,d_{p}\right)  ,\int_{s}^{t}%
\widetilde{\omega}\left(  u\right)  \mathbf{d}u\right\rangle \\
& =\int_{s}^{t}\left\langle \left(  \gamma;d_{1},...,d_{p}\right)
,\widetilde{\omega}\left(  u\right)  \right\rangle \mathbf{d}u
\end{align*}
for any $\left(  \gamma;d_{1},...,d_{p}\right)  \in\left(  \mathcal{P}%
M\right)  ^{D^{p}}\times D^{p}$.
\end{definition}

It is easy to see that

\begin{proposition}
\label{t3.4}Given a mapping $\widetilde{\omega}:I\rightarrow\mathcal{A}%
^{p}\left(  \mathcal{P}M\right)  $\ with $p$\ being a natural number and
$s,t\in I$, we have
\[
\mathbf{d}\int_{s}^{t}\widetilde{\omega}\left(  u\right)  \mathbf{d}u=\int
_{s}^{t}\mathbf{d}\widetilde{\omega}\left(  u\right)  \mathbf{d}u
\]

\end{proposition}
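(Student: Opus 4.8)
The plan is to prove the identity by pairing both sides against arbitrary infinitesimal data, exploiting that a differential form on $\mathcal{P}M$ is completely determined by its values under the natural pairing (the characterization recalled in the Remark above, i.e. Proposition 2 in \S 4.2 of \cite{la}). Thus it suffices to show, for every $\eta=\left(\gamma;d_{1},\dots,d_{p+1}\right)\in\left(\mathcal{P}M\right)^{D^{p+1}}\times D^{p+1}$, that
\[
\left\langle\eta,\mathbf{d}\int_{s}^{t}\widetilde{\omega}\left(u\right)\mathbf{d}u\right\rangle=\left\langle\eta,\int_{s}^{t}\mathbf{d}\widetilde{\omega}\left(u\right)\mathbf{d}u\right\rangle .
\]

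Next I would bring in the description of exterior differentiation through the pairing supplied by \cite{la}. For any $p$-form $\alpha$ on $\mathcal{P}M$ the number $\left\langle\eta,\mathbf{d}\alpha\right\rangle$ is a \emph{finite} combination $\sum_{j}c_{j}\left\langle\xi_{j},\alpha\right\rangle$, where the test elements $\xi_{j}\in\left(\mathcal{P}M\right)^{D^{p}}\times D^{p}$ and the scalars $c_{j}\in\mathbf{R}$ are manufactured out of $\gamma$ and $d_{1},\dots,d_{p+1}$ alone by the infinitesimal (Kock--Lawvere) operations of the path space. The $0$-form case already displays this shape: $\left\langle\left(\gamma;d_{1}\right),\mathbf{d}f\right\rangle=f\left(\gamma\left(d_{1}\right)\right)-f\left(\gamma\left(0\right)\right)$, a difference of two evaluations of $f$. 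What matters is that this expression is $\mathbf{R}$-linear in $\alpha$ and that none of the data $\xi_{j},c_{j}$ refer to the integration variable $u$.

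Granting this, the computation is pure bookkeeping. Applying the formula to $\alpha=\int_{s}^{t}\widetilde{\omega}\left(u\right)\mathbf{d}u$ and invoking the defining property of the integral term by term gives
\[
\left\langle\eta,\mathbf{d}\int_{s}^{t}\widetilde{\omega}\left(u\right)\mathbf{d}u\right\rangle=\sum_{j}c_{j}\left\langle\xi_{j},\int_{s}^{t}\widetilde{\omega}\left(u\right)\mathbf{d}u\right\rangle=\sum_{j}c_{j}\int_{s}^{t}\left\langle\xi_{j},\widetilde{\omega}\left(u\right)\right\rangle\mathbf{d}u .
\]
Since the sum is finite, the coefficients $c_{j}$ are independent of $u$, and $\int_{s}^{t}\left(\cdot\right)\mathbf{d}u$ is $\mathbf{R}$-linear, I would interchange summation and integration and then re-assemble the exterior-derivative formula under the integral sign:
\[
\sum_{j}c_{j}\int_{s}^{t}\left\langle\xi_{j},\widetilde{\omega}\left(u\right)\right\rangle\mathbf{d}u=\int_{s}^{t}\left(\sum_{j}c_{j}\left\langle\xi_{j},\widetilde{\omega}\left(u\right)\right\rangle\right)\mathbf{d}u=\int_{s}^{t}\left\langle\eta,\mathbf{d}\widetilde{\omega}\left(u\right)\right\rangle\mathbf{d}u .
\]
A last use of the definition of the integral rewrites the right-hand side as $\left\langle\eta,\int_{s}^{t}\mathbf{d}\widetilde{\omega}\left(u\right)\mathbf{d}u\right\rangle$, and since $\eta$ was arbitrary the two $\left(p+1\right)$-forms coincide.

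The one genuinely load-bearing step is the middle one: one must be certain that $\mathbf{d}$, read off through the natural pairing, really is a finite $\mathbf{R}$-linear combination of pairings of $\alpha$ whose data live entirely in the microcube directions and never touch $u$. This is what I expect to be the main obstacle, and I would discharge it by appealing directly to the coboundary-type formula for $\mathbf{d}$ in \cite{la}; once its shape is in hand, the commutation of $\mathbf{d}$ with $\int_{s}^{t}\left(\cdot\right)\mathbf{d}u$ is forced by nothing more than linearity of the integral.
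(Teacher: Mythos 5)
Your proof is correct. The paper itself offers no argument for Proposition \ref{t3.4} (it is prefaced only by ``It is easy to see that''), and what you wrote is precisely the argument it implicitly relies on: in Lavendhomme's formulation the exterior derivative satisfies $\left\langle \eta ,\mathbf{d}\alpha \right\rangle =\left\langle \partial \eta ,\alpha \right\rangle$, where $\partial \eta$ is a finite formal combination of marked microcubes manufactured from $\eta$ alone, so commuting $\mathbf{d}$ past $\int_{s}^{t}\left( \cdot \right) \mathbf{d}u$ reduces to the $\mathbb{R}$-linearity of the integral, exactly as you say. Indeed, the paper uses this very mechanism — the pairing against $\partial \left( \gamma ;d_{1},...,d_{p}\right)$ and its interchange with the integral — in its own proof of Corollary \ref{t3.2.1}, so your ``load-bearing step'' is exactly the one the author takes for granted.
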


Now we are ready to give a definition of Chen's single integral, which is the
starting point of his iterated integrals.

\begin{definition}
Given $\omega\in\mathcal{A}^{p}\left(  M\right)  $\ with $p$\ being a positive
integer and $s,t\in I$, we define
\[
\int_{s}^{t}\omega\in\mathcal{A}^{p-1}\left(  \mathcal{P}M\right)
\]
to be
\[
\int_{s}^{t}\left(  \omega\right)  _{u}^{\ast}\mathbf{d}u
\]

\end{definition}

\begin{notation}
Given $\omega\in\mathcal{A}^{p}\left(  M\right)  $\ with $p$\ being a positive
integer, we write
\[
\int\omega\in\mathcal{A}^{p-1}\left(  \mathcal{P}M\right)
\]
for
\[
\int_{0}^{1}\omega
\]

\end{notation}

\begin{proposition}
\label{t3.2}Given $\omega\in\mathcal{A}^{p}\left(  M\right)  $\ with
$p$\ being a positive integer and $t\in I$, we have
\[
\mathbf{d}\left(  \omega\right)  _{t}^{\ast}=\iota_{t}^{\ast}\mathbf{L}%
_{\frac{\partial}{\partial t}}\varphi^{\ast}\omega-\left(  \mathbf{d}%
\omega\right)  _{t}^{\ast}%
\]

\end{proposition}

\begin{proof}
We have
\begin{align*}
& \mathbf{d}\left(  \omega\right)  _{t}^{\ast}\\
& =\mathbf{d}\iota_{t}^{\ast}\mathbf{i}_{\frac{\partial}{\partial t}}%
\varphi^{\ast}\omega\\
& =\iota_{t}^{\ast}\mathbf{di}_{\frac{\partial}{\partial t}}\varphi^{\ast
}\omega\\
& =\iota_{t}^{\ast}\left(  \mathbf{L}_{\frac{\partial}{\partial t}}%
-\mathbf{i}_{\frac{\partial}{\partial t}}\mathbf{d}\right)  \varphi^{\ast
}\omega\\
& \text{[By Theorem \ref{t2.2}]}\\
& =\iota_{t}^{\ast}\mathbf{L}_{\frac{\partial}{\partial t}}\varphi^{\ast
}\omega-\iota_{t}^{\ast}\mathbf{i}_{\frac{\partial}{\partial t}}%
\mathbf{d}\varphi^{\ast}\omega\\
& =\iota_{t}^{\ast}\mathbf{L}_{\frac{\partial}{\partial t}}\varphi^{\ast
}\omega-\left(  \mathbf{d}\omega\right)  _{t}^{\ast}%
\end{align*}

\end{proof}

\begin{corollary}
\label{t3.2.1}Let $t\in I$ and $d\in D$. Given $\omega\in\mathcal{A}%
^{p}\left(  M\right)  $\ with $p$\ being a positive integer, we have
\[
\mathbf{d}\int_{t}^{t+d}\omega=-\int_{t}^{t+d}\mathbf{d}\omega-\varphi
_{t}^{\ast}\omega+\varphi_{t+d}^{\ast}\omega
\]

\end{corollary}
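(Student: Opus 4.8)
The plan is to differentiate the infinitesimal single integral by first carrying $\mathbf{d}$ through the integral sign by means of Proposition \ref{t3.4}, then replacing $\mathbf{d}\left(\omega\right)_{u}^{\ast}$ by the expression furnished in Proposition \ref{t3.2}, and finally recognizing the surviving Lie-derivative term as the boundary contribution $\varphi_{t+d}^{\ast}\omega-\varphi_{t}^{\ast}\omega$ via the synthetic fundamental theorem of the calculus. Concretely, I would begin from the definition of $\int_{t}^{t+d}\omega$ together with Proposition \ref{t3.4} to obtain
\[
\mathbf{d}\int_{t}^{t+d}\omega=\mathbf{d}\int_{t}^{t+d}\left(\omega\right)_{u}^{\ast}\mathbf{d}u=\int_{t}^{t+d}\mathbf{d}\left(\omega\right)_{u}^{\ast}\mathbf{d}u,
\]
and then substitute Proposition \ref{t3.2} and split the integral in two. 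The term carrying $\left(\mathbf{d}\omega\right)_{u}^{\ast}$ reassembles, directly by the definition of the single integral applied to $\mathbf{d}\omega$, into $-\int_{t}^{t+d}\mathbf{d}\omega$. Thus the entire identity reduces to proving
\[
\int_{t}^{t+d}\iota_{u}^{\ast}\mathbf{L}_{\frac{\partial}{\partial t}}\varphi^{\ast}\omega\,\mathbf{d}u=\varphi_{t+d}^{\ast}\omega-\varphi_{t}^{\ast}\omega.
\]

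To dispose of this last identity I would invoke the flow interpretation of the vector field $\frac{\partial}{\partial t}$. Its time-$d$ transformation $F_{d}$ sends $\left(u,\theta\right)$ to $\left(u+d,\theta\right)$, whence $\iota_{u+d}=F_{d}\circ\iota_{u}$, and in particular $\varphi_{t+d}^{\ast}\omega=\iota_{t}^{\ast}F_{d}^{\ast}\varphi^{\ast}\omega$. Combining this with the synthetic description of the Lie derivative as the infinitesimal generator of its flow, namely $F_{d}^{\ast}\varphi^{\ast}\omega=\varphi^{\ast}\omega+d\,\mathbf{L}_{\frac{\partial}{\partial t}}\varphi^{\ast}\omega$ for $d\in D$, and pulling back along $\iota_{t}$, I obtain
\[
\varphi_{t+d}^{\ast}\omega=\varphi_{t}^{\ast}\omega+d\,\iota_{t}^{\ast}\mathbf{L}_{\frac{\partial}{\partial t}}\varphi^{\ast}\omega.
\]
On the other hand, since $d\in D$, the infinitesimal integration rule $\int_{t}^{t+d}f\left(u\right)\mathbf{d}u=d\,f\left(t\right)$ (a Taylor-expansion consequence of the synthetic fundamental theorem of the calculus, as $d^{2}=0$) evaluates the left-hand side above as $d\,\iota_{t}^{\ast}\mathbf{L}_{\frac{\partial}{\partial t}}\varphi^{\ast}\omega$, which is exactly $\varphi_{t+d}^{\ast}\omega-\varphi_{t}^{\ast}\omega$. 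Substituting back yields the asserted formula.

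The routine bookkeeping — commuting pullbacks past wedges and interior products, and matching the split integral against the definitions — is harmless. The step I expect to be the crux is the last one, identifying the integrated Lie-derivative term with the boundary difference; the delicate point is to wed the synthetic Lie-derivative-as-flow-generator relation $F_{d}^{\ast}\varphi^{\ast}\omega=\varphi^{\ast}\omega+d\,\mathbf{L}_{\frac{\partial}{\partial t}}\varphi^{\ast}\omega$ to the infinitesimal integration formula, with everything interpreted at the level of the defining pairing for differential forms so that the manipulations remain legitimate for forms and not merely for scalar functions.
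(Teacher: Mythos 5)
Your proof is correct and takes essentially the same route as the paper's: both rest on Proposition \ref{t3.2}, the nilpotent-integration rule $\int_{t}^{t+d}f\left(  u\right)  \mathbf{d}u=d\,f\left(  t\right)$ for $d\in D$, and the identification $d\,\iota_{t}^{\ast}\mathbf{L}_{\frac{\partial}{\partial t}}\varphi^{\ast}\omega=\varphi_{t+d}^{\ast}\omega-\varphi_{t}^{\ast}\omega$ furnished by the flow characterization of the Lie derivative. The only difference is organizational: the paper runs the computation at the level of the pairing $\left\langle \left(  \gamma;d_{1},...,d_{p}\right)  ,\cdot\right\rangle$, collapsing the integral by nilpotency \emph{before} invoking Proposition \ref{t3.2} at the single instant $t$, whereas you commute $\mathbf{d}$ past the integral via Proposition \ref{t3.4} first and collapse the surviving Lie-derivative integral last.
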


\begin{proof}
It suffices to show that
\begin{align*}
& \left\langle \left(  \gamma;d_{1},...,d_{p}\right)  ,\mathbf{d}\int
_{t}^{t+d}\omega\right\rangle \\
& =\left\langle \left(  \gamma;d_{1},...,d_{p}\right)  ,-\int_{t}%
^{t+d}\mathbf{d}\omega-\varphi_{t}^{\ast}\omega+\varphi_{t+d}^{\ast}%
\omega\right\rangle
\end{align*}
for any $\left(  \gamma;d_{1},...,d_{p}\right)  \in M^{D^{p}}\times D^{p}$,
which follows from the following computation:
\begin{align*}
& \left\langle \left(  \gamma;d_{1},...,d_{p}\right)  ,\mathbf{d}\int
_{t}^{t+d}\omega\right\rangle \\
& =\left\langle \partial\left(  \gamma;d_{1},...,d_{p}\right)  ,\int_{t}%
^{t+d}\omega\right\rangle \\
& =\int_{t}^{t+d}\left\langle \partial\left(  \gamma;d_{1},...,d_{p}\right)
,\left(  \omega\right)  _{u}^{\ast}\right\rangle \mathbf{d}u\\
& =d\left\langle \partial\left(  \gamma;d_{1},...,d_{p}\right)  ,\left(
\omega\right)  _{t}^{\ast}\right\rangle \\
& =d\left\langle \left(  \gamma;d_{1},...,d_{p}\right)  ,\mathbf{d}\left(
\omega\right)  _{t}^{\ast}\right\rangle \\
& =d\left\langle \left(  \gamma;d_{1},...,d_{p}\right)  ,\iota_{t}^{\ast
}\mathbf{L}_{\frac{\partial}{\partial t}}\varphi^{\ast}\omega-\left(
\mathbf{d}\omega\right)  _{t}^{\ast}\right\rangle \\
& \text{\lbrack By Proposition \ref{t3.2}]}\\
& =\left\langle \left(  \gamma;d_{1},...,d_{p}\right)  ,d\iota_{t}^{\ast
}\mathbf{L}_{\frac{\partial}{\partial t}}\varphi^{\ast}\omega\right\rangle
-d\left\langle \left(  \gamma;d_{1},...,d_{p}\right)  ,\left(  \mathbf{d}%
\omega\right)  _{t}^{\ast}\right\rangle \\
& =\left\langle \left(  \gamma;d_{1},...,d_{p}\right)  ,\varphi_{t+d}^{\ast
}\omega-\varphi_{t}^{\ast}\omega\right\rangle -\left\langle \left(
\gamma;d_{1},...,d_{p}\right)  ,\int_{t}^{t+d}\mathbf{d}\omega\right\rangle \\
& =\left\langle \left(  \gamma;d_{1},...,d_{p}\right)  ,-\int_{t}%
^{t+d}\mathbf{d}\omega-\varphi_{t}^{\ast}\omega+\varphi_{t+d}^{\ast}%
\omega\right\rangle
\end{align*}

\end{proof}

\begin{corollary}
\label{t3.2.2}Given $\omega\in\mathcal{A}^{p}\left(  M\right)  $\ with
$p$\ being a positive integer, we have
\[
\mathbf{d}\int_{s}^{t}\omega=-\int_{s}^{t}\mathbf{d}\omega-\varphi_{s}^{\ast
}\omega+\varphi_{t}^{\ast}\omega
\]
for any $s,t\in I$. In particular,
\[
\mathbf{d}\int\omega=-\int\mathbf{d}\omega-\varphi_{0}^{\ast}\omega
+\varphi_{1}^{\ast}\omega
\]

\end{corollary}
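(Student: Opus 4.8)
The plan is to derive Corollary \ref{t3.2.2} from Corollary \ref{t3.2.1} by an integration argument, exploiting the fact that the infinitesimal version at an arbitrary $t$ (with increment $d \in D$) has already been established. The key observation is that Corollary \ref{t3.2.1} gives us the behaviour of $\mathbf{d}\int_{t}^{t+d}\omega$ for every $t$ and every first-order infinitesimal $d$, and I would like to integrate this in $t$ to pass from the infinitesimal increment $d$ to a finite increment from $s$ to $t$. First I would fix $s,t\in I$ and consider the function $F(r) = \mathbf{d}\int_{s}^{r}\omega - \varphi_{r}^{\ast}\omega$, viewed as an element of $\mathcal{A}^{p}(\mathcal{P}M)$ depending on the parameter $r \in I$, and seek to show that its derivative in $r$ equals $-(\mathbf{d}\omega)_{r}^{\ast}$; integrating from $s$ to $t$ then yields the claim, since $\int_{s}^{t}(\mathbf{d}\omega)_{r}^{\ast}\mathbf{d}r = \int_{s}^{t}\mathbf{d}\omega$ by the definition of Chen's single integral.

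Concretely, I would use the additivity of the integral over paths, namely $\int_{s}^{r+d}\omega = \int_{s}^{r}\omega + \int_{r}^{r+d}\omega$, which follows directly from the defining formula for $\int_{s}^{t}\widetilde\omega(u)\,\mathbf{d}u$ and the additivity of the underlying Kock--Lawvere integral. Applying $\mathbf{d}$ and invoking Proposition \ref{t3.4} to commute $\mathbf{d}$ past the integral, I obtain $\mathbf{d}\int_{s}^{r+d}\omega = \mathbf{d}\int_{s}^{r}\omega + \mathbf{d}\int_{r}^{r+d}\omega$. Substituting Corollary \ref{t3.2.1} for the last term gives
\[
\mathbf{d}\int_{s}^{r+d}\omega - \mathbf{d}\int_{s}^{r}\omega = -\int_{r}^{r+d}\mathbf{d}\omega - \varphi_{r}^{\ast}\omega + \varphi_{r+d}^{\ast}\omega.
\]
This is exactly the statement that the $r$-derivative of $F(r) = \mathbf{d}\int_{s}^{r}\omega - \varphi_{r}^{\ast}\omega$ is $-(\mathbf{d}\omega)_{r}^{\ast}$, once one recognises that $\int_{r}^{r+d}\mathbf{d}\omega$ has $d\,(\mathbf{d}\omega)_{r}^{\ast}$ as its first-order part (again by the integral's definition and the Kock--Lawvere fundamental theorem of calculus). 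Integrating in $r$ from $s$ to $t$ and using $F(s) = \mathbf{d}\int_{s}^{s}\omega - \varphi_{s}^{\ast}\omega = -\varphi_{s}^{\ast}\omega$ then produces $\mathbf{d}\int_{s}^{t}\omega - \varphi_{t}^{\ast}\omega = -\int_{s}^{t}\mathbf{d}\omega - \varphi_{s}^{\ast}\omega$, which rearranges to the desired identity.

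The main obstacle I anticipate is bookkeeping rather than conceptual: I must be careful that all of these identities are being asserted at the level of differential forms in $\mathcal{A}^{p}(\mathcal{P}M)$, so the cleanest route is to test every equation against an arbitrary element $(\gamma;d_{1},\dots,d_{p}) \in (\mathcal{P}M)^{D^{p}}\times D^{p}$ via the natural pairing, exactly as in the proof of Corollary \ref{t3.2.1}. Under this pairing everything reduces to the one-variable Kock--Lawvere calculus on $I$, where the passage from the infinitesimal statement to the finite one is just the synthetic fundamental theorem of calculus applied to the $\mathbf{R}$-valued function $u \mapsto \langle(\gamma;d_{1},\dots,d_{p}), (\mathbf{d}\omega)_{u}^{\ast}\rangle$ and the boundary terms $\langle(\gamma;d_{1},\dots,d_{p}), \varphi_{u}^{\ast}\omega\rangle$. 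The second, ``in particular'' assertion is then immediate by setting $s=0$ and $t=1$ and recalling that $\int\omega$ abbreviates $\int_{0}^{1}\omega$.
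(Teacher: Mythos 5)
Your proposal is correct and is essentially the paper's own argument: the paper likewise fixes $s$, forms the scalar function $u\mapsto\left\langle \left(\gamma;d_{1},\dots,d_{p}\right),\mathbf{d}\int_{s}^{u}\omega+\int_{s}^{u}\mathbf{d}\omega+\varphi_{s}^{\ast}\omega-\varphi_{u}^{\ast}\omega\right\rangle$ under the natural pairing, shows its increment over any $d\in D$ vanishes by Corollary \ref{t3.2.1}, and concludes from $F^{\prime}\equiv 0$ together with $F(s)=0$. Your variant merely keeps $\int_{s}^{r}\mathbf{d}\omega$ as the computed derivative of $\mathbf{d}\int_{s}^{r}\omega-\varphi_{r}^{\ast}\omega$ and then integrates, which is the same synthetic fundamental-theorem-of-calculus argument rearranged (one small note: the step where you split $\mathbf{d}\int_{s}^{r+d}\omega$ needs only linearity of $\mathbf{d}$, not Proposition \ref{t3.4}).
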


\begin{proof}
Let us define a function $F:I\rightarrow\mathbb{R}$ to be
\[
F(u)=\left\langle \left(  \gamma;d_{1},...,d_{p}\right)  ,\mathbf{d}\int
_{s}^{u}\omega+\int_{s}^{u}\mathbf{d}\omega+\varphi_{s}^{\ast}\omega
-\varphi_{u}^{\ast}\omega\right\rangle
\]
for any $u\in I$. Then we have
\begin{align*}
& F(u+d)-F(u)\\
& =\left\langle \left(  \gamma;d_{1},...,d_{p}\right)  ,\mathbf{d}\int
_{u}^{u+d}\omega+\int_{u}^{u+d}\mathbf{d}\omega+\varphi_{u}^{\ast}%
\omega-\varphi_{u+d}^{\ast}\omega\right\rangle \\
& =0
\end{align*}
by dint of the above corollary, which implies that
\[
F^{\prime}(u)=0
\]
for any $u\in I$. Since
\[
F\left(  s\right)  =0
\]
holds trivially, we are done.
\end{proof}

It is easy to see that

\begin{proposition}
\label{t3.3}Given $\omega\in\mathcal{A}^{p}\left(  M\right)  $\ with
$p$\ being a positive integer and $s,s^{\prime},t,t^{\prime}\in I$, we have
\[
\int_{s}^{t}\omega=\int_{s}^{s^{\prime}}\omega+\int_{s^{\prime}}^{t^{\prime}%
}\omega+\int_{t^{\prime}}^{t}\omega
\]

\end{proposition}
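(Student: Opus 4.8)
The plan is to reduce the asserted identity, which lives in $\mathcal{A}^{p-1}\left(\mathcal{P}M\right)$, to the elementary additivity of the scalar integral in synthetic differential geometry by testing it against the natural pairing. By the characterization of differential forms recalled above (Proposition 2 in \S 4.2 of \cite{la}), a $(p-1)$-form on $\mathcal{P}M$ is completely determined by its values $\langle\left(\gamma;d_{1},\ldots,d_{p-1}\right),-\rangle$ on tuples $\left(\gamma;d_{1},\ldots,d_{p-1}\right)\in\left(\mathcal{P}M\right)^{D^{p-1}}\times D^{p-1}$. Hence it suffices to prove that both sides agree after pairing with an arbitrary such tuple.

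Next I would unwind the two definitions in play. Chen's single integral is $\int_{s}^{t}\omega=\int_{s}^{t}\left(\omega\right)_{u}^{\ast}\mathbf{d}u$, and the integral of a form-valued mapping was defined precisely so that the pairing commutes with the integral sign. Thus, setting $f(u)=\langle\left(\gamma;d_{1},\ldots,d_{p-1}\right),\left(\omega\right)_{u}^{\ast}\rangle$ for a fixed tuple, we obtain
\[
\left\langle \left(\gamma;d_{1},\ldots,d_{p-1}\right),\int_{s}^{t}\omega\right\rangle =\int_{s}^{t}f(u)\,\mathbf{d}u,
\]
and likewise for each of the three summands on the right-hand side, with the \emph{same} integrand $f$ but with the respective limits of integration. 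Everything has now been transported to the scalar integral of a single function $f:I\rightarrow\mathbb{R}$.

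The remaining step is the additivity of the scalar integral, which is immediate from the integration axiom of \cite{la}: choosing a primitive $F$ of $f$ (so that $F^{\prime}=f$ and $\int_{a}^{b}f=F(b)-F(a)$), a telescoping computation gives
\begin{align*}
& \int_{s}^{s^{\prime}}f+\int_{s^{\prime}}^{t^{\prime}}f+\int_{t^{\prime}}^{t}f\\
& =\bigl(F(s^{\prime})-F(s)\bigr)+\bigl(F(t^{\prime})-F(s^{\prime})\bigr)+\bigl(F(t)-F(t^{\prime})\bigr)\\
& =F(t)-F(s)=\int_{s}^{t}f,
\end{align*}
and this holds for the four points $s,s^{\prime},t,t^{\prime}\in I$ regardless of their relative order. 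Pairing this scalar identity back through $\langle\left(\gamma;d_{1},\ldots,d_{p-1}\right),-\rangle$ and invoking the determination of forms by the pairing then yields the proposition.

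I do not expect a substantive obstacle; the content is exactly the scalar telescoping identity, and the only care required lies in the bookkeeping that moves the pairing across the integral sign---licensed term by term by the definition of the form-valued integral---and in observing that a common primitive $F$ serves all three sub-integrals. This is why the statement can be prefaced with ``It is easy to see that.''
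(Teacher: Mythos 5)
Your proof is correct, and it is exactly the routine argument the paper intends when it prefaces this proposition with ``It is easy to see that'' and omits any proof: unwind Chen's single integral, move the pairing across the integral sign (which is licensed by the very definition of the form-valued integral), and conclude by the telescoping identity $F(t)-F(s)=\bigl(F(s^{\prime})-F(s)\bigr)+\bigl(F(t^{\prime})-F(s^{\prime})\bigr)+\bigl(F(t)-F(t^{\prime})\bigr)$ for a common primitive, valid irrespective of the ordering of $s,s^{\prime},t^{\prime},t$. This also matches the techniques the paper uses in its explicit proofs nearby, namely the pairing-transport in Corollary \ref{t3.2.1} and the primitive-based argument in Corollary \ref{t3.2.2}.
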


\section{\label{s4}Iterated Integrals}

\begin{definition}
Given $s_{1},...,s_{k},t\in I$ and $\omega_{1}\in\mathcal{A}^{p_{1}}\left(
M\right)  ,...,\omega_{k}\in\mathcal{A}^{p_{k}}\left(  M\right)  $ with
$p_{1},...,p_{k}$\ being positive integers, we define
\[
\int_{s_{1},...,s_{k}}^{t}\omega_{1}...\omega_{k}\in\mathcal{A}^{p_{1}%
+...+p_{k}-k}\left(  \mathcal{P}M\right)
\]
by induction on $k$\ to be
\[
\int_{s_{k}}^{t}\left(  \left(  \int_{s_{1},...,s_{k-1}}^{u}\omega
_{1}...\omega_{k-1}\right)  \wedge\left(  \omega_{k}\right)  _{u}^{\ast
}\right)  \mathbf{d}u
\]
By way of example, we have
\[
\int_{s_{1},s_{2}}^{t}\omega_{1}\omega_{2}=\int_{s_{2}}^{t}\left(  \left(
\int_{s_{1}}^{u}\omega_{1}\right)  \wedge\left(  \omega_{2}\right)  _{u}%
^{\ast}\right)  \mathbf{d}u
\]

\end{definition}

\begin{notation}
Given $\omega_{1}\in\mathcal{A}^{p_{1}}\left(  M\right)  ,...,\omega_{k}%
\in\mathcal{A}^{p_{k}}\left(  M\right)  $ with $p_{1},...,p_{k}$\ being
positive integers, we write
\[
\int\omega_{1}...\omega_{k}%
\]
for
\[
\int_{0,...,0}^{1}\omega_{1}...\omega_{k}%
\]

\end{notation}

\begin{notation}
Since the space $\mathcal{A}^{p}\left(  \mathcal{P}M\right)  $ with $p$\ being
a natural number is a Euclidean $\mathbb{R}$-module, any mapping
$\widetilde{\omega}:I\rightarrow\mathcal{A}^{p}\left(  \mathcal{P}M\right)  $
and any $t\in I$ give rise to a unique $\mathbf{D}_{t}\widetilde{\omega}%
\in\mathcal{A}^{p}\left(  \mathcal{P}M\right)  $ such that
\[
\widetilde{\omega}\left(  t+d\right)  -\widetilde{\omega}\left(  t\right)
=d\mathbf{D}_{t}\widetilde{\omega}%
\]
for any $d\in D$.
\end{notation}

It is easy to see that

\begin{proposition}
\label{t4.1}Let us suppose that we are given $\omega\in\mathcal{A}^{p}\left(
M\right)  $\ with $p$\ being a positive integer and $t\in I$. Let
$\widetilde{\omega}:I\rightarrow\mathcal{A}^{p}\left(  \mathcal{P}M\right)  $
be the mapping $s\in I\mapsto\varphi_{s}^{\ast}\omega$. Then we have
\[
\mathbf{D}_{t}\left(  \widetilde{\omega}\right)  =\iota_{t}^{\ast}%
\mathbf{L}_{\frac{\partial}{\partial t}}\varphi^{\ast}\omega
\]

\end{proposition}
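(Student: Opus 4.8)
The plan is to move the entire computation onto the product $I\times\mathcal{P}M$ and to recognize the required identity as the infinitesimal-flow description of the Lie derivative. First I would note that, since $\varphi_{s}=\varphi\circ\iota_{s}$, we have $\widetilde{\omega}\left(s\right)=\varphi_{s}^{\ast}\omega=\iota_{s}^{\ast}\varphi^{\ast}\omega$, so that the whole difference quotient can be expressed through the single form $\varphi^{\ast}\omega\in\mathcal{A}^{p}\left(I\times\mathcal{P}M\right)$ rather than through the varying maps $\varphi_{s}$.

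Next I would exploit the fact that the family $\left(\iota_{s}\right)_{s\in I}$ is carried by the flow of $\frac{\partial}{\partial t}$. Writing $\Phi_{d}$ for the map $\left(t,\theta\right)\mapsto\left(t+d,\theta\right)$ associated with the vector field $\frac{\partial}{\partial t}$, one checks immediately that $\Phi_{d}\circ\iota_{t}=\iota_{t+d}$, both sides sending $\theta$ to $\left(t+d,\theta\right)$. By functoriality of pullback this gives $\widetilde{\omega}\left(t+d\right)=\iota_{t+d}^{\ast}\varphi^{\ast}\omega=\iota_{t}^{\ast}\Phi_{d}^{\ast}\varphi^{\ast}\omega$.

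Then I would invoke the defining property of the Lie derivative with respect to an infinitesimal transformation, namely that $\Phi_{d}^{\ast}\varphi^{\ast}\omega=\varphi^{\ast}\omega+d\,\mathbf{L}_{\frac{\partial}{\partial t}}\varphi^{\ast}\omega$ for every $d\in D$. Subtracting $\widetilde{\omega}\left(t\right)=\iota_{t}^{\ast}\varphi^{\ast}\omega$ and using the linearity of $\iota_{t}^{\ast}$ yields $\widetilde{\omega}\left(t+d\right)-\widetilde{\omega}\left(t\right)=d\,\iota_{t}^{\ast}\mathbf{L}_{\frac{\partial}{\partial t}}\varphi^{\ast}\omega$ for all $d\in D$. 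Comparing this with the defining relation $\widetilde{\omega}\left(t+d\right)-\widetilde{\omega}\left(t\right)=d\,\mathbf{D}_{t}\widetilde{\omega}$ and cancelling the universally quantified $d$---which is permissible precisely because $\mathcal{A}^{p}\left(\mathcal{P}M\right)$ is a Euclidean $\mathbb{R}$-module, so the coefficient of $d$ is unique---produces exactly $\mathbf{D}_{t}\widetilde{\omega}=\iota_{t}^{\ast}\mathbf{L}_{\frac{\partial}{\partial t}}\varphi^{\ast}\omega$.

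The only step that is not pure naturality of pullback is the passage through the Lie derivative: the crux is to be sure that, with the conventions of \cite{la} and the third (infinitesimal-transformation) viewpoint favoured here, the infinitesimal flow $\Phi_{d}$ acts on forms precisely by $\Phi_{d}^{\ast}=\mathrm{id}+d\,\mathbf{L}_{\frac{\partial}{\partial t}}$ with the correct sign, and that the concluding cancellation of $d$ is licensed by the Kock--Lawvere axiom. I expect this to be the main, and essentially the only, obstacle; every other manipulation is formal.
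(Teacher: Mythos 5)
Your proof is correct, and it is exactly the argument the paper intends: the paper offers no proof of this proposition at all (it is prefaced by ``It is easy to see that''), and the suppressed verification is precisely your chain $\widetilde{\omega}\left(t+d\right)=\iota_{t+d}^{\ast}\varphi^{\ast}\omega=\iota_{t}^{\ast}\Phi_{d}^{\ast}\varphi^{\ast}\omega=\iota_{t}^{\ast}\left(\varphi^{\ast}\omega+d\,\mathbf{L}_{\frac{\partial}{\partial t}}\varphi^{\ast}\omega\right)$, followed by cancellation of the universally quantified $d$ via the Euclidean-module (Kock--Lawvere) uniqueness that the paper itself invokes in defining $\mathbf{D}_{t}$. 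The sign worry you flag is harmless: with the conventions of the cited text, the Lie derivative of a form along a vector field viewed as an infinitesimal transformation is defined exactly by $X_{d}^{\ast}\omega-\omega=d\,\mathbf{L}_{X}\omega$, which is what you use.
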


The following two are no other than variants of the fundametal theorem in calculus.

\begin{proposition}
\label{t4.2}Given $s,t\in I$ and a mapping $\widetilde{\omega}:I\rightarrow
\mathcal{A}^{p}\left(  \mathcal{P}M\right)  $ with $p$\ being a natural
number, we have
\[
\mathbf{D}_{t}\left(  \int_{s}^{t}\widetilde{\omega}\left(  u\right)
\mathbf{d}u\right)  =\widetilde{\omega}\left(  t\right)
\]

\end{proposition}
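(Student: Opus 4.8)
The plan is to reduce the identity to the ordinary (scalar-valued) fundamental theorem of calculus by testing both sides against an arbitrary element of $\left(\mathcal{P}M\right)^{D^{p}}\times D^{p}$. Write $\Omega\left(t\right)=\int_{s}^{t}\widetilde{\omega}\left(u\right)\mathbf{d}u$, so that $\Omega:I\rightarrow\mathcal{A}^{p}\left(\mathcal{P}M\right)$. By the very definition of $\mathbf{D}_{t}$ as the unique element satisfying $\Omega\left(t+d\right)-\Omega\left(t\right)=d\,\mathbf{D}_{t}\Omega$ for all $d\in D$, it suffices to establish that $\Omega\left(t+d\right)-\Omega\left(t\right)=d\,\widetilde{\omega}\left(t\right)$ for every $d\in D$.

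Since differential forms on $\mathcal{P}M$ are completely determined by their natural pairings (the Remark in \S\ref{s2}, invoking Proposition 2 in \S 4.2 of \cite{la}), I would fix an arbitrary $\left(\gamma;d_{1},...,d_{p}\right)\in\left(\mathcal{P}M\right)^{D^{p}}\times D^{p}$ and put $F\left(u\right)=\left\langle\left(\gamma;d_{1},...,d_{p}\right),\widetilde{\omega}\left(u\right)\right\rangle$, a real-valued function of $u\in I$. The defining property of the module-valued integral then yields $\left\langle\left(\gamma;d_{1},...,d_{p}\right),\Omega\left(t\right)\right\rangle=\int_{s}^{t}F\left(u\right)\mathbf{d}u$, converting the whole question into a statement about the genuine function $G\left(t\right):=\int_{s}^{t}F\left(u\right)\mathbf{d}u$.

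Now I would invoke the scalar fundamental theorem of calculus---in the present setting the integration axiom of synthetic differential geometry---which gives $G^{\prime}\left(t\right)=F\left(t\right)$ and hence $G\left(t+d\right)-G\left(t\right)=d\,F\left(t\right)$ for all $d\in D$. Unwinding the pairings, the left-hand side is $\left\langle\left(\gamma;d_{1},...,d_{p}\right),\Omega\left(t+d\right)-\Omega\left(t\right)\right\rangle$ while the right-hand side is $\left\langle\left(\gamma;d_{1},...,d_{p}\right),d\,\widetilde{\omega}\left(t\right)\right\rangle$, the latter by $\mathbb{R}$-linearity of the pairing in its second slot. As $\left(\gamma;d_{1},...,d_{p}\right)$ was arbitrary, the two forms $\Omega\left(t+d\right)-\Omega\left(t\right)$ and $d\,\widetilde{\omega}\left(t\right)$ coincide, which is exactly what was needed.

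I do not expect any serious obstacle here; the proposition is essentially bookkeeping that transports the one-dimensional fundamental theorem of calculus through the pairing. The only point deserving care is the passage from the module-valued $\mathbf{D}_{t}$ to the scalar derivative $G^{\prime}$: one must check that evaluating the pairing commutes with the infinitesimal difference quotient, which is immediate from the $\mathbb{R}$-linearity of $\left\langle\left(\gamma;d_{1},...,d_{p}\right),-\right\rangle$ together with the cancellation law for $d\in D$. Everything else is a direct appeal to the definitions of the integral and of $\mathbf{D}_{t}$ and to the integration axiom.
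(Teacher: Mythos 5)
Your proof is correct, and it is essentially the argument the paper intends: Proposition \ref{t4.2} is stated there with no proof at all, being dismissed as a variant of the fundamental theorem of calculus. Your reduction through the pairing---the defining property of the module-valued integral, the scalar integration axiom applied to $G\left(  t\right)  =\int_{s}^{t}F\left(  u\right)  \mathbf{d}u$, and the fact that the pairings $\left\langle \left(  \gamma;d_{1},...,d_{p}\right)  ,-\right\rangle$ jointly separate elements of $\mathcal{A}^{p}\left(  \mathcal{P}M\right)$ (by cancellation of the universally quantified $d_{i}$'s)---is precisely the routine verification left implicit, and it is the same device the paper itself employs in the proofs of Corollaries \ref{t3.2.1} and \ref{t3.2.2}.
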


\begin{proposition}
\label{t4.3}Given $s,t\in I$ and a mapping $\widetilde{\omega}:I\rightarrow
\mathcal{A}^{p}\left(  \mathcal{P}M\right)  $ with $p$\ being a natural
number, we have
\[
\int_{s}^{t}\mathbf{D}_{u}\widetilde{\omega}\left(  u\right)  \mathbf{d}%
u=\widetilde{\omega}\left(  t\right)  -\widetilde{\omega}\left(  s\right)
\]

\end{proposition}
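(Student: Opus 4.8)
The plan is to recognize this as the fundamental theorem of calculus one rung up, at the level of $\mathcal{A}^{p}\left(\mathcal{P}M\right)$-valued functions of $u$, and to reduce it to the scalar fundamental theorem through the pairing of the Remark in \S\ref{s2}, since a differential form on $\mathcal{P}M$ is completely determined by its pairings against elements of $\left(\mathcal{P}M\right)^{D^{p}}\times D^{p}$ ($\mathcal{P}M=M^{I}$ being microlinear whenever $M$ is).

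First I would fix an arbitrary $\left(\gamma;d_{1},\dots,d_{p}\right)\in\left(\mathcal{P}M\right)^{D^{p}}\times D^{p}$ and introduce the real-valued function $f:I\rightarrow\mathbb{R}$ given by $f\left(u\right)=\left\langle \left(\gamma;d_{1},\dots,d_{p}\right),\widetilde{\omega}\left(u\right)\right\rangle $. Feeding the defining relation $\widetilde{\omega}\left(u+d\right)-\widetilde{\omega}\left(u\right)=d\,\mathbf{D}_{u}\widetilde{\omega}$ into the pairing and using its $\mathbb{R}$-linearity in the second argument yields $f\left(u+d\right)-f\left(u\right)=d\,\left\langle \left(\gamma;d_{1},\dots,d_{p}\right),\mathbf{D}_{u}\widetilde{\omega}\right\rangle $ for every $d\in D$, so that $f^{\prime}\left(u\right)=\left\langle \left(\gamma;d_{1},\dots,d_{p}\right),\mathbf{D}_{u}\widetilde{\omega}\right\rangle $.

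Second, by the defining property of $\int_{s}^{t}$ (that the pairing passes inside the integral) I would compute $\left\langle \left(\gamma;d_{1},\dots,d_{p}\right),\int_{s}^{t}\mathbf{D}_{u}\widetilde{\omega}\left(u\right)\,\mathbf{d}u\right\rangle =\int_{s}^{t}f^{\prime}\left(u\right)\,\mathbf{d}u$, then invoke the scalar fundamental theorem of calculus (the integration axiom of synthetic differential geometry) to identify this with $f\left(t\right)-f\left(s\right)=\left\langle \left(\gamma;d_{1},\dots,d_{p}\right),\widetilde{\omega}\left(t\right)-\widetilde{\omega}\left(s\right)\right\rangle $. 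As $\left(\gamma;d_{1},\dots,d_{p}\right)$ was arbitrary, the two forms coincide. A slicker variant, paralleling the proof of Corollary \ref{t3.2.2}, is to set $G\left(u\right)=\int_{s}^{u}\mathbf{D}_{v}\widetilde{\omega}\left(v\right)\,\mathbf{d}v-\widetilde{\omega}\left(u\right)+\widetilde{\omega}\left(s\right)$, observe $G\left(s\right)=0$, and use Proposition \ref{t4.2} together with the linearity of $\mathbf{D}_{u}$ to get $\mathbf{D}_{u}G=\mathbf{D}_{u}\widetilde{\omega}-\mathbf{D}_{u}\widetilde{\omega}=0$, whence $G\equiv0$ by a constancy argument.

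The only genuine obstacle is the step from a vanishing derivative to a constant function, which is not formal in synthetic differential geometry but is supplied by the integration axiom (the constancy principle); carrying it out for $\mathcal{A}^{p}\left(\mathcal{P}M\right)$-valued maps forces the reduction to the scalar case through the pairing, which is exactly why the injectivity of the pairing --- guaranteed by the microlinearity of $\mathcal{P}M$ and the characterization cited in the Remark --- must be in place. Everything else is a routine unwinding of the definitions of $\mathbf{D}_{u}$ and of $\int_{s}^{t}$.
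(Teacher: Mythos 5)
Your argument is correct; note that the paper gives no proof of Proposition \ref{t4.3} at all, merely flagging it (together with Proposition \ref{t4.2}) as a variant of the fundamental theorem of calculus, and your reduction through the pairing to the scalar integration axiom of synthetic differential geometry is precisely the routine verification the paper leaves to the reader. The constancy-principle variant you sketch is equally valid and matches the technique the paper itself uses to prove Corollary \ref{t3.2.2}.
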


It is easy to see that

\begin{proposition}
\label{t4.4}Given mappings $\widetilde{\omega}_{1}:I\rightarrow\mathcal{A}%
^{p_{1}}\left(  \mathcal{P}M\right)  $\ and $\widetilde{\omega}_{2}%
:I\rightarrow\mathcal{A}^{p_{2}}\left(  \mathcal{P}M\right)  $\ with $p_{1}%
$\ and $p_{2}$ being natural numbers, we have
\[
\mathbf{D}_{t}\left(  \widetilde{\omega}_{1}\left(  t\right)  \wedge
\widetilde{\omega}_{2}\left(  t\right)  \right)  =\mathbf{D}_{t}%
\widetilde{\omega}_{1}\left(  t\right)  \wedge\widetilde{\omega}_{2}\left(
t\right)  +\widetilde{\omega}_{1}\left(  t\right)  \wedge\mathbf{D}%
_{t}\widetilde{\omega}_{2}\left(  t\right)
\]
for any $t\in I$.
\end{proposition}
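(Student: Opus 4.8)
The plan is to unwind the definition of $\mathbf{D}_t$ directly at the level of the defining infinitesimal increment, exploiting the fact that $\mathcal{A}^{p}\left(  \mathcal{P}M\right)  $ is a Euclidean $\mathbb{R}$-module, so that the increment characterization is legitimate and the resulting $\mathbf{D}_t$ is uniquely determined. First I would fix $t\in I$ and let $d\in D$ range over the first-order infinitesimals. Applying the defining property of $\mathbf{D}_t$ to $\widetilde{\omega}_1$ and $\widetilde{\omega}_2$ separately gives
\[
\widetilde{\omega}_{i}\left(  t+d\right)  =\widetilde{\omega}_{i}\left(  t\right)  +d\,\mathbf{D}_{t}\widetilde{\omega}_{i}
\]
for $i=1,2$.

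Next I would substitute both increments into the wedge product $\widetilde{\omega}_1\left(t+d\right)\wedge\widetilde{\omega}_2\left(t+d\right)$ and expand using the $\mathbb{R}$-bilinearity of $\wedge$ on $\mathcal{A}\left(\mathcal{P}M\right)$. This produces four terms: the undifferentiated product $\widetilde{\omega}_1\left(t\right)\wedge\widetilde{\omega}_2\left(t\right)$, the two first-order cross terms $d\left(\mathbf{D}_{t}\widetilde{\omega}_1\wedge\widetilde{\omega}_2\left(t\right)\right)$ and $d\left(\widetilde{\omega}_1\left(t\right)\wedge\mathbf{D}_{t}\widetilde{\omega}_2\right)$, and a single second-order term carrying a factor of $d^{2}$. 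The decisive step is that $d\in D$ forces $d^{2}=0$, so the second-order term vanishes identically, leaving
\[
\widetilde{\omega}_1\left(t+d\right)\wedge\widetilde{\omega}_2\left(t+d\right)-\widetilde{\omega}_1\left(t\right)\wedge\widetilde{\omega}_2\left(t\right)=d\left(\mathbf{D}_{t}\widetilde{\omega}_1\left(t\right)\wedge\widetilde{\omega}_2\left(t\right)+\widetilde{\omega}_1\left(t\right)\wedge\mathbf{D}_{t}\widetilde{\omega}_2\left(t\right)\right).
\]

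Finally I would invoke the uniqueness clause in the defining notation for $\mathbf{D}_t$: since the left-hand side is precisely the increment of the map $s\mapsto\widetilde{\omega}_1\left(s\right)\wedge\widetilde{\omega}_2\left(s\right)$ from $t$ to $t+d$, and it has been rewritten as $d$ times a fixed element of $\mathcal{A}^{p_{1}+p_{2}}\left(\mathcal{P}M\right)$ that is independent of $d$, that fixed element must be $\mathbf{D}_{t}\left(\widetilde{\omega}_1\left(t\right)\wedge\widetilde{\omega}_2\left(t\right)\right)$, which is the asserted formula.

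I do not anticipate a genuine obstacle. Unlike the Leibniz rule for the exterior derivative in Proposition \ref{t3.1}, the operator $\mathbf{D}_t$ differentiates in the interval parameter rather than in the form degree, so no Koszul sign $\left(-1\right)^{p_{1}}$ enters, and the whole argument is the standard synthetic verification that a first-order infinitesimal derivation satisfies the product rule. The only point requiring minor care is that scalar multiplication by $d$ commutes past $\wedge$ in either slot, which is immediate from the $\mathbb{R}$-bilinearity of the wedge product; everything else follows from $d^{2}=0$ together with the uniqueness of the increment.
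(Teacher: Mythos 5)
Your proof is correct: the paper offers no argument for this proposition (it is introduced with ``It is easy to see that''), and your expansion of $\bigl(\widetilde{\omega}_1(t)+d\,\mathbf{D}_t\widetilde{\omega}_1\bigr)\wedge\bigl(\widetilde{\omega}_2(t)+d\,\mathbf{D}_t\widetilde{\omega}_2\bigr)$ using bilinearity, $d^2=0$, and the uniqueness clause in the Euclidean-module definition of $\mathbf{D}_t$ is precisely the routine synthetic verification the author is implicitly invoking. Nothing is missing; your remark that no Koszul sign arises, since $\mathbf{D}_t$ differentiates in the interval parameter rather than in form degree, is also accurate.
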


\begin{lemma}
\label{t4.5}Let $s_{1},s_{2},t\in I$. Given $\omega_{1}\in\mathcal{A}^{p_{1}%
}\left(  M\right)  $ and $\omega_{2}\in\mathcal{A}^{p_{2}}\left(  M\right)  $
with $p_{1}$ and $p_{2}$ being positive integers, we have
\begin{align*}
& \int_{s_{2}}^{t}\left(  \omega_{1}\right)  _{u}^{\ast}\wedge\varphi
_{u}^{\ast}\omega_{2}\mathbf{d}u+\int_{s_{2}}^{t}\left(  \int_{s_{1}}%
^{u}\omega_{1}\right)  \wedge\left(  \iota_{u}^{\ast}\mathbf{L}_{\frac
{\partial}{\partial t}}\varphi^{\ast}\omega_{2}\right)  \mathbf{d}u\\
& =\left(  \int_{s_{1}}^{t}\omega_{1}\right)  \wedge\varphi_{t}^{\ast}%
\omega_{2}-\left(  \int_{s_{1}}^{s_{2}}\omega_{1}\right)  \wedge\varphi
_{s_{2}}^{\ast}\omega_{2}%
\end{align*}

\end{lemma}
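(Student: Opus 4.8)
The statement has the shape of an integration-by-parts formula, so the plan is to recognize the left-hand side as the integral of a total derivative $\mathbf{D}_u$ and then apply the fundamental theorem of calculus (Proposition \ref{t4.3}). First I would set $\widetilde{\omega}_1(u)=\int_{s_1}^{u}\omega_1$ and $\widetilde{\omega}_2(u)=\varphi_u^{\ast}\omega_2$, both regarded as mappings $I\rightarrow\mathcal{A}^{\bullet}(\mathcal{P}M)$, and compute $\mathbf{D}_u\bigl(\widetilde{\omega}_1(u)\wedge\widetilde{\omega}_2(u)\bigr)$. By the Leibniz rule for $\mathbf{D}$ (Proposition \ref{t4.4}) this splits as $\mathbf{D}_u\widetilde{\omega}_1(u)\wedge\widetilde{\omega}_2(u)+\widetilde{\omega}_1(u)\wedge\mathbf{D}_u\widetilde{\omega}_2(u)$.

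Next I would evaluate the two derivatives separately. For the first factor, Proposition \ref{t4.2} (the other half of the fundamental theorem) gives $\mathbf{D}_u\bigl(\int_{s_1}^{u}\omega_1\bigr)=\mathbf{D}_u\bigl(\int_{s_1}^{u}(\omega_1)_v^{\ast}\mathbf{d}v\bigr)=(\omega_1)_u^{\ast}$, using the very definition of Chen's single integral as $\int_{s_1}^{u}(\omega_1)_v^{\ast}\mathbf{d}v$. For the second factor, Proposition \ref{t4.1} gives directly $\mathbf{D}_u\bigl(\varphi_u^{\ast}\omega_2\bigr)=\iota_u^{\ast}\mathbf{L}_{\frac{\partial}{\partial t}}\varphi^{\ast}\omega_2$. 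Substituting these two identities, I obtain
\[
\mathbf{D}_u\!\left(\left(\int_{s_1}^{u}\omega_1\right)\wedge\varphi_u^{\ast}\omega_2\right)=(\omega_1)_u^{\ast}\wedge\varphi_u^{\ast}\omega_2+\left(\int_{s_1}^{u}\omega_1\right)\wedge\left(\iota_u^{\ast}\mathbf{L}_{\frac{\partial}{\partial t}}\varphi^{\ast}\omega_2\right),
\]
whose right-hand side is exactly the integrand of the sum of the two integrals on the left of the asserted equation.

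Finally I would integrate this identity in $u$ from $s_2$ to $t$ and apply Proposition \ref{t4.3}, which yields $\left(\int_{s_1}^{t}\omega_1\right)\wedge\varphi_t^{\ast}\omega_2-\left(\int_{s_1}^{s_2}\omega_1\right)\wedge\varphi_{s_2}^{\ast}\omega_2$, precisely the right-hand side of the lemma. The only point demanding care is the bookkeeping of signs and degrees: since $\int_{s_1}^{u}\omega_1$ has even-shifted degree $p_1-1$ and one must be sure that Propositions \ref{t4.4} and \ref{t3.1} are being invoked with the correct degree conventions, I would double-check that no Koszul sign is generated in the wedge Leibniz rule here. I expect this sign/degree verification to be the main (and essentially only) obstacle; the structural skeleton of the argument is a clean one-line application of the product rule followed by the fundamental theorem of calculus.
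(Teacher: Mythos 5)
Your proposal is correct and follows essentially the same route as the paper: the paper's proof likewise computes $\mathbf{D}_{u}\left(\left(\int_{s_{1}}^{u}\omega_{1}\right)\wedge\varphi_{u}^{\ast}\omega_{2}\right)$ via the Leibniz rule (Proposition \ref{t4.4}) together with Propositions \ref{t4.2} and \ref{t4.1}, and then invokes Proposition \ref{t4.3}. Your sign worry resolves itself immediately, since $\mathbf{D}_{u}$ differentiates in the parameter $u$ rather than acting as a graded derivation, so Proposition \ref{t4.4} carries no Koszul sign.
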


\begin{proof}
We have
\[
\mathbf{D}_{u}\left(  \left(  \int_{s_{1}}^{u}\omega_{1}\right)  \wedge
\varphi_{u}^{\ast}\omega_{2}\right)  =\left(  \omega_{1}\right)  _{u}^{\ast
}\wedge\varphi_{u}^{\ast}\omega_{2}+\left(  \int_{s_{1}}^{u}\omega_{1}\right)
\wedge\left(  \iota_{u}^{\ast}\mathbf{L}_{\frac{\partial}{\partial t}}%
\varphi^{\ast}\omega_{2}\right)
\]
so that the desired formula follows by dint of Proposition \ref{t4.3}.
\end{proof}

\begin{theorem}
\label{t4.6}Let $s_{1},s_{2},t\in I$. Given $\omega_{1}\in\mathcal{A}^{p_{1}%
}\left(  M\right)  $ and $\omega_{2}\in\mathcal{A}^{p_{2}}\left(  M\right)  $
with $p_{1}$ and $p_{2}$ being positive integers, we have
\begin{align*}
& \mathbf{d}\int_{s_{1},s_{2}}^{t}\omega_{1}\omega_{2}\\
& =-\int_{s_{1},s_{2}}^{t}\left(  \mathbf{d}\omega_{1}\right)  \omega
_{2}+\left(  -1\right)  ^{p_{1}}\int_{s_{1},s_{2}}^{t}\omega_{1}\left(
\mathbf{d}\omega_{2}\right)  +\left(  -1\right)  ^{p_{1}}\int_{s_{2}}%
^{t}\omega_{1}\wedge\omega_{2}\\
& -\varphi_{s_{1}}^{\ast}\omega_{1}\wedge\int_{s_{2}}^{t}\omega_{2}+\left(
-1\right)  ^{p_{1}}\left(  \int_{s_{1}}^{s_{2}}\omega_{1}\right)
\wedge\varphi_{s_{2}}^{\ast}\omega_{2}-\left(  -1\right)  ^{p_{1}}\left(
\int_{s_{1}}^{t}\omega_{1}\right)  \wedge\varphi_{t}^{\ast}\omega_{2}%
\end{align*}
In particular, we have
\begin{align*}
& \mathbf{d}\int\omega_{1}\omega_{2}\\
& =-\int\left(  \mathbf{d}\omega_{1}\right)  \omega_{2}+\left(  -1\right)
^{p_{1}}\int\omega_{1}\left(  \mathbf{d}\omega_{2}\right)  +\left(  -1\right)
^{p_{1}}\int\omega_{1}\wedge\omega_{2}\\
& -\varphi_{0}^{\ast}\omega_{1}\wedge\int\omega_{2}-\left(  -1\right)
^{p_{1}}\left(  \int\omega_{1}\right)  \wedge\varphi_{1}^{\ast}\omega_{2}%
\end{align*}

\end{theorem}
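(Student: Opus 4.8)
The plan is to compute $\mathbf{d}\int_{s_1,s_2}^t\omega_1\omega_2$ directly from the inductive definition, namely $\int_{s_1,s_2}^t\omega_1\omega_2=\int_{s_2}^t\bigl(\bigl(\int_{s_1}^u\omega_1\bigr)\wedge(\omega_2)_u^\ast\bigr)\mathbf{d}u$, and to push $\mathbf{d}$ through the outer integral using Proposition \ref{t3.4}. Writing $\widetilde{\omega}(u)=\bigl(\int_{s_1}^u\omega_1\bigr)\wedge(\omega_2)_u^\ast$, this reduces everything to evaluating $\mathbf{d}\widetilde{\omega}(u)$ pointwise in $u$ and then integrating.

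First I would apply the Leibniz rule for $\mathbf{d}$ to the wedge $\bigl(\int_{s_1}^u\omega_1\bigr)\wedge(\omega_2)_u^\ast$. The form $\int_{s_1}^u\omega_1$ has degree $p_1-1$, so the sign picked up is $(-1)^{p_1-1}$, giving $\mathbf{d}\widetilde{\omega}(u)=\bigl(\mathbf{d}\int_{s_1}^u\omega_1\bigr)\wedge(\omega_2)_u^\ast+(-1)^{p_1-1}\bigl(\int_{s_1}^u\omega_1\bigr)\wedge\mathbf{d}(\omega_2)_u^\ast$. Now I would substitute the two already-established exterior-derivative formulas: for the first factor, Corollary \ref{t3.2.2} gives $\mathbf{d}\int_{s_1}^u\omega_1=-\int_{s_1}^u\mathbf{d}\omega_1-\varphi_{s_1}^\ast\omega_1+\varphi_u^\ast\omega_1$; for the second, Proposition \ref{t3.2} gives $\mathbf{d}(\omega_2)_u^\ast=\iota_u^\ast\mathbf{L}_{\frac{\partial}{\partial t}}\varphi^\ast\omega_2-(\mathbf{d}\omega_2)_u^\ast$. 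Expanding the product then yields a sum of several terms, each of which, after integration $\int_{s_2}^t(\cdots)\mathbf{d}u$, is recognizable either as an iterated integral of length two (the terms involving $\int_{s_1}^u\mathbf{d}\omega_1$ and $\int_{s_1}^u\omega_1\wedge(\mathbf{d}\omega_2)_u^\ast$), a single integral (the term $\varphi_u^\ast\omega_1\wedge(\omega_2)_u^\ast=(\omega_1\wedge\omega_2)$-type contribution, via Proposition \ref{t3.1}), or a boundary-type expression.

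The step I expect to be the main obstacle is the term $\int_{s_2}^t\bigl(\int_{s_1}^u\omega_1\bigr)\wedge\bigl(\iota_u^\ast\mathbf{L}_{\frac{\partial}{\partial t}}\varphi^\ast\omega_2\bigr)\mathbf{d}u$ coming from the Lie-derivative piece, together with the companion term $\int_{s_2}^t\varphi_u^\ast\omega_1\wedge(\omega_2)_u^\ast\,\mathbf{d}u$: neither is manifestly of the target form, and it is exactly here that Lemma \ref{t4.5} does the decisive work. That lemma was built precisely to integrate these two pieces by parts in the $u$-variable, converting their sum into the boundary expression $\bigl(\int_{s_1}^t\omega_1\bigr)\wedge\varphi_t^\ast\omega_2-\bigl(\int_{s_1}^{s_2}\omega_1\bigr)\wedge\varphi_{s_2}^\ast\omega_2$. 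So my plan is to isolate these two terms, apply Lemma \ref{t4.5} (after noting that $(\omega_1)_u^\ast\wedge\varphi_u^\ast\omega_2$ relates to $(\omega_1\wedge\omega_2)_u^\ast$ by Proposition \ref{t3.1}), and collect the resulting boundary terms.

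Finally I would reassemble all the pieces, track the signs carefully — the $(-1)^{p_1}$ factors arise from combining the $(-1)^{p_1-1}$ Leibniz sign with the internal signs of the substituted formulas — and match against the six terms on the right-hand side of the claimed identity. The ``in particular'' statement for $\int\omega_1\omega_2$ then follows by specializing to $s_1=s_2=0$, $t=1$; note that the term $(-1)^{p_1}\bigl(\int_{s_1}^{s_2}\omega_1\bigr)\wedge\varphi_{s_2}^\ast\omega_2$ vanishes because $\int_0^0\omega_1=0$, which accounts for the disappearance of one term in the specialized formula. The whole argument is a bookkeeping computation whose only genuinely nontrivial ingredient is the integration-by-parts packaged in Lemma \ref{t4.5}.
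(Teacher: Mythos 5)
Your proposal follows the paper's own proof essentially step for step: expand by the inductive definition, push $\mathbf{d}$ inside the $u$-integral via Proposition \ref{t3.4}, apply the Leibniz rule with the $\left(-1\right)^{p_{1}-1}$ sign, substitute Corollary \ref{t3.2.2} and Proposition \ref{t3.2}, dispose of the Lie-derivative term with Lemma \ref{t4.5}, and recombine the leftover mixed terms through Proposition \ref{t3.1} into $\left(-1\right)^{p_{1}}\int_{s_{2}}^{t}\omega_{1}\wedge\omega_{2}$, exactly as the paper does. The only slip is verbal: Lemma \ref{t4.5} pairs the Lie-derivative term with $\int_{s_{2}}^{t}\left(\omega_{1}\right)_{u}^{\ast}\wedge\varphi_{u}^{\ast}\omega_{2}\,\mathbf{d}u$, not with $\int_{s_{2}}^{t}\varphi_{u}^{\ast}\omega_{1}\wedge\left(\omega_{2}\right)_{u}^{\ast}\,\mathbf{d}u$ as you first state, but your parenthetical appeal to Proposition \ref{t3.1} is precisely the bookkeeping that repairs this and yields the wedge term in the final formula.
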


\begin{proof}
We have
\begin{align*}
& \mathbf{d}\int_{s_{1},s_{2}}^{t}\omega_{1}\omega_{2}\\
& =\mathbf{d}\int_{s_{2}}^{t}\left(  \int_{s_{1}}^{u}\omega_{1}\right)
\wedge\left(  \omega_{2}\right)  _{u}^{\ast}\mathbf{d}u\\
& =\int_{s_{2}}^{t}\left\{  \left(  \mathbf{d}\int_{s_{1}}^{u}\omega
_{1}\right)  \wedge\left(  \omega_{2}\right)  _{u}^{\ast}+\left(  -1\right)
^{p_{1}-1}\left(  \int_{s_{1}}^{u}\omega_{1}\right)  \wedge\mathbf{d}\left(
\omega_{2}\right)  _{u}^{\ast}\right\}  \mathbf{d}u\\
& \text{[By Proposition \ref{t3.4}]}\\
& =\int_{s_{2}}^{t}\left(  \mathbf{d}\int_{s_{1}}^{u}\omega_{1}\right)
\wedge\left(  \omega_{2}\right)  _{u}^{\ast}\mathbf{d}u+\left(  -1\right)
^{p_{1}-1}\int_{s_{2}}^{t}\left(  \int_{s_{1}}^{u}\omega_{1}\right)
\wedge\mathbf{d}\left(  \omega_{2}\right)  _{u}^{\ast}\mathbf{d}u\\
& =\int_{s_{2}}^{t}\left(  -\int_{s_{1}}^{u}\mathbf{d}\omega_{1}%
-\varphi_{s_{1}}^{\ast}\omega_{1}+\varphi_{u}^{\ast}\omega_{1}\right)
\wedge\left(  \omega_{2}\right)  _{u}^{\ast}\mathbf{d}u\\
& +\left(  -1\right)  ^{p_{1}-1}\int_{s_{2}}^{t}\left(  \int_{s_{1}}^{u}%
\omega_{1}\right)  \wedge\left(  \iota_{u}^{\ast}\mathbf{L}_{\frac{\partial
}{\partial t}}\varphi^{\ast}\omega_{2}-\left(  \mathbf{d}\omega_{2}\right)
_{u}^{\ast}\right)  \mathbf{d}u\\
& \text{[By Proposition \ref{t3.2} and Corollary \ref{t3.2.2}]}\\
& =-\int_{s_{1},s_{2}}^{t}\left(  \mathbf{d}\omega_{1}\right)  \omega
_{2}+\left(  -1\right)  ^{p_{1}}\int_{s_{1},s_{2}}^{t}\omega_{1}\left(
\mathbf{d}\omega_{2}\right)  -\left(  \varphi_{s_{1}}^{\ast}\omega_{1}\right)
\wedge\left(  \int_{s_{2}}^{t}\left(  \omega_{2}\right)  _{u}^{\ast}%
\mathbf{d}u\right) \\
& +\int_{s_{2}}^{t}\left(  \varphi_{u}^{\ast}\omega_{1}\right)  \wedge\left(
\omega_{2}\right)  _{u}^{\ast}\mathbf{d}u-\left(  -1\right)  ^{p_{1}}%
\int_{s_{2}}^{t}\left(  \int_{s_{1}}^{u}\omega_{1}\right)  \wedge\left(
\iota_{u}^{\ast}\mathbf{L}_{\frac{\partial}{\partial t}}\varphi^{\ast}%
\omega_{2}\right)  \mathbf{d}u\\
& =-\int_{s_{1},s_{2}}^{t}\left(  \mathbf{d}\omega_{1}\right)  \omega
_{2}+\left(  -1\right)  ^{p_{1}}\int_{s_{1},s_{2}}^{t}\omega_{1}\left(
\mathbf{d}\omega_{2}\right) \\
& -\left(  \varphi_{s_{1}}^{\ast}\omega_{1}\right)  \wedge\left(  \int_{s_{2}%
}^{t}\omega_{2}\right)  +\int_{s_{2}}^{t}\left(  \varphi_{u}^{\ast}\omega
_{1}\right)  \wedge\left(  \omega_{2}\right)  _{u}^{\ast}\mathbf{d}u\\
& -\left(  -1\right)  ^{p_{1}}\left\{  \left(  \int_{s_{1}}^{t}\omega
_{1}\right)  \wedge\varphi_{t}^{\ast}\omega_{2}-\left(  \int_{s_{1}}^{s_{2}%
}\omega_{1}\right)  \wedge\varphi_{s_{2}}^{\ast}\omega_{2}-\int_{s_{2}}%
^{t}\left(  \omega_{1}\right)  _{u}^{\ast}\wedge\varphi_{u}^{\ast}\omega
_{2}\mathbf{d}u\right\} \\
& \text{[By Lemma \ref{t4.5}]}\\
& =-\int_{s_{1},s_{2}}^{t}\left(  \mathbf{d}\omega_{1}\right)  \omega
_{2}+\left(  -1\right)  ^{p_{1}}\int_{s_{1},s_{2}}^{t}\omega_{1}\left(
\mathbf{d}\omega_{2}\right) \\
& +\left\{  \int_{s_{2}}^{t}\left(  \varphi_{u}^{\ast}\omega_{1}\right)
\wedge\left(  \omega_{2}\right)  _{u}^{\ast}\mathbf{d}u+\left(  -1\right)
^{p_{1}}\int_{s_{2}}^{t}\left(  \omega_{1}\right)  _{u}^{\ast}\wedge
\varphi_{u}^{\ast}\omega_{2}\mathbf{d}u\right\} \\
& -\left(  \varphi_{s_{1}}^{\ast}\omega_{1}\right)  \wedge\left(  \int_{s_{2}%
}^{t}\omega_{2}\right)  +\left(  -1\right)  ^{p_{1}}\left(  \int_{s_{1}%
}^{s_{2}}\omega_{1}\right)  \wedge\varphi_{s_{2}}^{\ast}\omega_{2}-\left(
-1\right)  ^{p_{1}}\left(  \int_{s_{1}}^{t}\omega_{1}\right)  \wedge
\varphi_{t}^{\ast}\omega_{2}\\
& =-\int_{s_{1},s_{2}}^{t}\left(  \mathbf{d}\omega_{1}\right)  \omega
_{2}+\left(  -1\right)  ^{p_{1}}\int_{s_{1},s_{2}}^{t}\omega_{1}\left(
\mathbf{d}\omega_{2}\right)  +\left(  -1\right)  ^{p_{1}}\int_{s_{2}}%
^{t}\omega_{1}\wedge\omega_{2}\\
& -\left(  \varphi_{s_{1}}^{\ast}\omega_{1}\right)  \wedge\left(  \int_{s_{2}%
}^{t}\omega_{2}\right)  +\left(  -1\right)  ^{p_{1}}\left(  \int_{s_{1}%
}^{s_{2}}\omega_{1}\right)  \wedge\varphi_{s_{2}}^{\ast}\omega_{2}-\left(
-1\right)  ^{p_{1}}\left(  \int_{s_{1}}^{t}\omega_{1}\right)  \wedge
\varphi_{t}^{\ast}\omega_{2}%
\end{align*}

\end{proof}

\begin{lemma}
\label{t4.7}Let $s_{1},s_{2},s_{3},t\in I$. Given $\omega_{1}\in
\mathcal{A}^{p_{1}}\left(  M\right)  $, $\omega_{2}\in\mathcal{A}^{p_{2}%
}\left(  M\right)  $ and $\omega_{3}\in\mathcal{A}^{p_{3}}\left(  M\right)  $
with $p_{1}$, $p_{2}$ and $p_{3}$ being positive integers, we have
\begin{align*}
& \int_{s_{3}}^{t}\left(  \int_{s_{1}}^{u}\omega_{1}\right)  \wedge\left(
\omega_{2}\right)  _{u}^{\ast}\wedge\varphi_{u}^{\ast}\omega_{3}%
\mathbf{d}u+\int_{s_{3}}^{t}\left(  \int_{s_{1},s_{2}}^{u}\omega_{1}\omega
_{2}\right)  \wedge\left(  \iota_{u}^{\ast}\mathbf{L}_{\frac{\partial
}{\partial t}}\varphi^{\ast}\omega_{3}\right)  \mathbf{d}u\\
& =\left(  \int_{s_{1},s_{2}}^{t}\omega_{1}\omega_{2}\right)  \wedge
\varphi_{t}^{\ast}\omega_{3}-\left(  \int_{s_{1},s_{2}}^{s_{3}}\omega
_{1}\omega_{2}\right)  \wedge\varphi_{s_{3}}^{\ast}\omega_{3}%
\end{align*}

\end{lemma}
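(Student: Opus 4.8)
The plan is to recognize Lemma \ref{t4.7} as the exact three-form analogue of Lemma \ref{t4.5}, and to prove it by the same mechanism: identify the left-hand side as an integral of a derivative $\mathbf{D}_u$ of a product, and then invoke the fundamental theorem of calculus in the guise of Proposition \ref{t4.3}. First I would apply Proposition \ref{t4.4} to the two-factor wedge product $\left(\int_{s_1,s_2}^u\omega_1\omega_2\right)\wedge\varphi_u^{\ast}\omega_3$, where the first factor is the mapping $u\mapsto\int_{s_1,s_2}^u\omega_1\omega_2$ and the second is $u\mapsto\varphi_u^{\ast}\omega_3$. This yields
\[
\mathbf{D}_u\left(\left(\int_{s_1,s_2}^u\omega_1\omega_2\right)\wedge\varphi_u^{\ast}\omega_3\right)=\mathbf{D}_u\left(\int_{s_1,s_2}^u\omega_1\omega_2\right)\wedge\varphi_u^{\ast}\omega_3+\left(\int_{s_1,s_2}^u\omega_1\omega_2\right)\wedge\mathbf{D}_u\left(\varphi_u^{\ast}\omega_3\right).
\]

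The next step is to evaluate each of the two $\mathbf{D}_u$ terms. For the second, Proposition \ref{t4.1} gives immediately $\mathbf{D}_u(\varphi_u^{\ast}\omega_3)=\iota_u^{\ast}\mathbf{L}_{\frac{\partial}{\partial t}}\varphi^{\ast}\omega_3$, which matches the second integrand on the left-hand side of the lemma. For the first term I would unfold the definition of the length-two iterated integral: by the Definition preceding Notation, $\int_{s_1,s_2}^u\omega_1\omega_2=\int_{s_2}^u\left(\int_{s_1}^v\omega_1\right)\wedge(\omega_2)_v^{\ast}\,\mathbf{d}v$, so by Proposition \ref{t4.2} (the other half of the fundamental theorem) its $\mathbf{D}_u$-derivative is the integrand evaluated at $u$, namely $\left(\int_{s_1}^u\omega_1\right)\wedge(\omega_2)_u^{\ast}$. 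Wedging this with $\varphi_u^{\ast}\omega_3$ reproduces the first integrand on the left-hand side of the lemma.

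Having matched both integrands, I would conclude by integrating the product-rule identity from $s_3$ to $t$ and applying Proposition \ref{t4.3}, which states $\int_{s_3}^t\mathbf{D}_u\widetilde{\omega}(u)\,\mathbf{d}u=\widetilde{\omega}(t)-\widetilde{\omega}(s_3)$ with $\widetilde{\omega}(u)=\left(\int_{s_1,s_2}^u\omega_1\omega_2\right)\wedge\varphi_u^{\ast}\omega_3$. The right-hand side then becomes exactly $\left(\int_{s_1,s_2}^t\omega_1\omega_2\right)\wedge\varphi_t^{\ast}\omega_3-\left(\int_{s_1,s_2}^{s_3}\omega_1\omega_2\right)\wedge\varphi_{s_3}^{\ast}\omega_3$, as desired. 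I do not anticipate a genuine obstacle here; the proof is structurally identical to that of Lemma \ref{t4.5}, with the single atomic factor $\int_{s_1}^u\omega_1$ replaced throughout by the length-two iterated integral $\int_{s_1,s_2}^u\omega_1\omega_2$. The only point demanding a little care is the bookkeeping of signs, since $\mathbf{D}_u$ of the first factor could in principle carry a sign depending on its degree; but because Proposition \ref{t4.4} places $\mathbf{D}_u\widetilde{\omega}_1$ in the leading position with coefficient $+1$ and the derivative here comes from the upper limit of integration via Proposition \ref{t4.2} (not from an exterior-derivative Leibniz rule), no sign is introduced, and the formula comes out clean exactly as stated.
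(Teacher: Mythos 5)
Your proof is correct and takes essentially the same approach as the paper: the paper's proof consists precisely of the identity
\[
\mathbf{D}_{u}\left(  \left(  \int_{s_{1},s_{2}}^{u}\omega_{1}\omega_{2}\right)  \wedge\varphi_{u}^{\ast}\omega_{3}\right)  =\left(  \int_{s_{1}}^{u}\omega_{1}\right)  \wedge\left(  \omega_{2}\right)  _{u}^{\ast}\wedge\varphi_{u}^{\ast}\omega_{3}+\left(  \int_{s_{1},s_{2}}^{u}\omega_{1}\omega_{2}\right)  \wedge\left(  \iota_{u}^{\ast}\mathbf{L}_{\frac{\partial}{\partial t}}\varphi^{\ast}\omega_{3}\right)
\]
followed by an appeal to Proposition \ref{t4.3}, exactly as in your plan. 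The only difference is that you spell out how this identity follows from Propositions \ref{t4.4}, \ref{t4.2} and \ref{t4.1}, a computation the paper leaves implicit.
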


\begin{proof}
We have
\begin{align*}
& \mathbf{D}_{u}\left(  \left(  \int_{s_{1},s_{2}}^{u}\omega_{1}\omega
_{2}\right)  \wedge\varphi_{u}^{\ast}\omega_{3}\right) \\
& =\left(  \int_{s_{1}}^{u}\omega_{1}\right)  \wedge\left(  \omega_{2}\right)
_{u}^{\ast}\wedge\varphi_{u}^{\ast}\omega_{3}+\left(  \int_{s_{1},s_{2}}%
^{u}\omega_{1}\omega_{2}\right)  \wedge\left(  \iota_{u}^{\ast}\mathbf{L}%
_{\frac{\partial}{\partial t}}\varphi^{\ast}\omega_{3}\right)
\end{align*}

\end{proof}

\begin{theorem}
\label{t4.8}Let $s_{1},s_{2},s_{3},t\in I$. Given $\omega_{1}\in
\mathcal{A}^{p_{1}}\left(  M\right)  $, $\omega_{2}\in\mathcal{A}^{p_{2}%
}\left(  M\right)  $ and $\omega_{3}\in\mathcal{A}^{p_{3}}\left(  M\right)  $
with $p_{1}$, $p_{2}$ and $p_{3}$ being positive integers, we have
\begin{align*}
& \mathbf{d}\int_{s_{1},s_{2},s_{3}}^{t}\omega_{1}\omega_{2}\omega_{3}\\
& =-\int_{s_{1},s_{2},s_{3}}^{t}\left(  \mathbf{d}\omega_{1}\right)
\omega_{2}\omega_{3}+\left(  -1\right)  ^{p_{1}}\int_{s_{1},s_{2},s_{3}}%
^{t}\omega_{1}\left(  \mathbf{d}\omega_{2}\right)  \omega_{3}-\left(
-1\right)  ^{p_{1}+p_{2}}\int_{s_{1},s_{2},s_{3}}^{t}\omega_{1}\omega
_{2}\left(  \mathbf{d}\omega_{3}\right) \\
& +\left(  -1\right)  ^{p_{1}}\int_{s_{2},s_{3}}^{t}\left(  \omega_{1}%
\wedge\omega_{2}\right)  \omega_{3}-\left(  -1\right)  ^{p_{1}+p_{2}}%
\int_{s_{1},s_{3}}^{t}\omega_{1}\left(  \omega_{2}\wedge\omega_{3}\right)
-\varphi_{s_{1}}^{\ast}\omega_{1}\wedge\int_{s_{2},s_{3}}^{t}\omega_{2}%
\omega_{3}\\
& +\left(  -1\right)  ^{p_{1}}\left(  \int_{s_{1}}^{s_{2}}\omega_{1}\right)
\wedge\left(  \varphi_{s_{2}}^{\ast}\omega_{2}\right)  \wedge\left(
\int_{s_{3}}^{t}\omega_{3}\right)  -\left(  -1\right)  ^{p_{1}+p_{2}}\left(
\int_{s_{1},s_{2}}^{s_{3}}\omega_{1}\omega_{2}\right)  \wedge\varphi_{s_{3}%
}^{\ast}\omega_{3}\\
& +\left(  -1\right)  ^{p_{1}+p_{2}}\left(  \int_{s_{1},s_{2}}^{t}\omega
_{1}\omega_{2}\right)  \wedge\varphi_{t}^{\ast}\omega_{3}%
\end{align*}
In particular, we have
\begin{align*}
& \mathbf{d}\int\omega_{1}\omega_{2}\omega_{3}\\
& =-\int\left(  \mathbf{d}\omega_{1}\right)  \omega_{2}\omega_{3}+\left(
-1\right)  ^{p_{1}}\int\omega_{1}\left(  \mathbf{d}\omega_{2}\right)
\omega_{3}-\left(  -1\right)  ^{p_{1}+p_{2}}\int\omega_{1}\omega_{2}\left(
\mathbf{d}\omega_{3}\right) \\
& +\left(  -1\right)  ^{p_{1}}\int\left(  \omega_{1}\wedge\omega_{2}\right)
\omega_{3}-\left(  -1\right)  ^{p_{1}+p_{2}}\int\omega_{1}\left(  \omega
_{2}\wedge\omega_{3}\right) \\
& -\varphi_{0}^{\ast}\omega_{1}\wedge\int\omega_{2}\omega_{3}+\left(
-1\right)  ^{p_{1}+p_{2}}\left(  \int\omega_{1}\omega_{2}\right)
\wedge\varphi_{1}^{\ast}\omega_{3}%
\end{align*}

\end{theorem}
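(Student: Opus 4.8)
The plan is to carry the strategy behind Theorem \ref{t4.6} up one level, feeding that very theorem back in wherever the two-fold derivation invoked Corollary \ref{t3.2.2}. First I would unfold the definition and commute the exterior derivative past the integral sign by Proposition \ref{t3.4}, writing
\[
\mathbf{d}\int_{s_1,s_2,s_3}^t\omega_1\omega_2\omega_3=\int_{s_3}^t\mathbf{d}\left\{  \left(  \int_{s_1,s_2}^u\omega_1\omega_2\right)  \wedge\left(  \omega_3\right)  _u^{\ast}\right\}  \mathbf{d}u.
\]
Since $\int_{s_1,s_2}^u\omega_1\omega_2$ has degree $p_1+p_2-2$, the graded Leibniz rule produces the sign $\left(  -1\right)  ^{p_1+p_2-2}=\left(  -1\right)  ^{p_1+p_2}$ and splits the integrand into one piece carrying $\mathbf{d}\int_{s_1,s_2}^u\omega_1\omega_2$ and one piece carrying $\mathbf{d}\left(  \omega_3\right)  _u^{\ast}$.

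Into the first piece I would substitute the whole six-term right-hand side of Theorem \ref{t4.6} with $t$ replaced by $u$, each summand wedged on the right with $\left(  \omega_3\right)  _u^{\ast}$ and integrated against $\mathbf{d}u$. Five of the six summands settle at once: the two exterior-derivative summands refold, by the very definition of the three-fold integral, into $-\int_{s_1,s_2,s_3}^t\left(  \mathbf{d}\omega_1\right)  \omega_2\omega_3$ and $\left(  -1\right)  ^{p_1}\int_{s_1,s_2,s_3}^t\omega_1\left(  \mathbf{d}\omega_2\right)  \omega_3$; the $\int_{s_2}^u\omega_1\wedge\omega_2$ summand becomes $\left(  -1\right)  ^{p_1}\int_{s_2,s_3}^t\left(  \omega_1\wedge\omega_2\right)  \omega_3$; and the two summands whose leading factor ($\varphi_{s_1}^{\ast}\omega_1$ in one, $\left(  \int_{s_1}^{s_2}\omega_1\right)  \wedge\varphi_{s_2}^{\ast}\omega_2$ in the other) is constant in $u$ factor straight out of the $u$-integral to give $-\varphi_{s_1}^{\ast}\omega_1\wedge\int_{s_2,s_3}^t\omega_2\omega_3$ and $\left(  -1\right)  ^{p_1}\left(  \int_{s_1}^{s_2}\omega_1\right)  \wedge\left(  \varphi_{s_2}^{\ast}\omega_2\right)  \wedge\left(  \int_{s_3}^t\omega_3\right)  $. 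Into the second piece I would substitute Proposition \ref{t3.2}, so that $\mathbf{d}\left(  \omega_3\right)  _u^{\ast}=\iota_u^{\ast}\mathbf{L}_{\frac{\partial}{\partial t}}\varphi^{\ast}\omega_3-\left(  \mathbf{d}\omega_3\right)  _u^{\ast}$; the $\left(  \mathbf{d}\omega_3\right)  _u^{\ast}$ part refolds into $-\left(  -1\right)  ^{p_1+p_2}\int_{s_1,s_2,s_3}^t\omega_1\omega_2\left(  \mathbf{d}\omega_3\right)  $, while the Lie-derivative part is exactly the second integrand on the left-hand side of Lemma \ref{t4.7}. Invoking that lemma replaces it with the two genuine boundary terms $\left(  -1\right)  ^{p_1+p_2}\left(  \int_{s_1,s_2}^t\omega_1\omega_2\right)  \wedge\varphi_t^{\ast}\omega_3$ and $-\left(  -1\right)  ^{p_1+p_2}\left(  \int_{s_1,s_2}^{s_3}\omega_1\omega_2\right)  \wedge\varphi_{s_3}^{\ast}\omega_3$, at the cost of one leftover integrand, $\left(  \int_{s_1}^u\omega_1\right)  \wedge\left(  \omega_2\right)  _u^{\ast}\wedge\varphi_u^{\ast}\omega_3$.

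The last task is to absorb the two integrands that did not refold on their own: this Lemma \ref{t4.7} leftover and the sixth summand of the Theorem \ref{t4.6} expansion, which carries $\left(  \int_{s_1}^u\omega_1\right)  \wedge\varphi_u^{\ast}\omega_2\wedge\left(  \omega_3\right)  _u^{\ast}$. Here I would run Proposition \ref{t3.1} backwards: applying it to $\left(  \omega_2\wedge\omega_3\right)  _u^{\ast}$ in the definition of the two-fold integral with inputs $\omega_1$ and $\omega_2\wedge\omega_3$ gives
\[
\int_{s_1,s_3}^t\omega_1\left(  \omega_2\wedge\omega_3\right)  =\int_{s_3}^t\left(  \int_{s_1}^u\omega_1\right)  \wedge\left\{  \left(  \omega_2\right)  _u^{\ast}\wedge\varphi_u^{\ast}\omega_3+\left(  -1\right)  ^{p_2}\varphi_u^{\ast}\omega_2\wedge\left(  \omega_3\right)  _u^{\ast}\right\}  \mathbf{d}u,
\]
and a short sign check (using $\left(  -1\right)  ^{2p_2}=1$) shows that the two leftover integrals are exactly $-\left(  -1\right)  ^{p_1+p_2}$ times the two summands on the right, so they merge into $-\left(  -1\right)  ^{p_1+p_2}\int_{s_1,s_3}^t\omega_1\left(  \omega_2\wedge\omega_3\right)  $. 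Collecting the nine surviving contributions then yields the asserted formula, and the special case $s_1=s_2=s_3=0$, $t=1$ follows because $\int_0^0\omega_1=0$ and $\int_{0,0}^0\omega_1\omega_2=0$ kill the two terms carrying those factors. I expect the only genuine difficulty to be the sign-and-index bookkeeping: tracking the degree shift when $\mathbf{d}$ crosses a wedge, and matching each leftover against the correct summand of the Proposition \ref{t3.1} expansion of $\left(  \omega_2\wedge\omega_3\right)  _u^{\ast}$.
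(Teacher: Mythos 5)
Your proposal is correct and follows the paper's own proof essentially step for step: unfold the definition, pass $\mathbf{d}$ inside the integral by Proposition \ref{t3.4}, apply the graded Leibniz rule, substitute Theorem \ref{t4.6} and Proposition \ref{t3.2}, dispose of the Lie-derivative term via Lemma \ref{t4.7}, and merge the two leftover integrands into $-\left(-1\right)^{p_{1}+p_{2}}\int_{s_{1},s_{3}}^{t}\omega_{1}\left(\omega_{2}\wedge\omega_{3}\right)$ using Proposition \ref{t3.1}. The only cosmetic difference is that you cite Proposition \ref{t3.1} explicitly at the merging step, where the paper performs that combination silently; your sign bookkeeping, including $(-1)^{p_{1}+p_{2}-2}=(-1)^{p_{1}+p_{2}}$ and the vanishing of the $\int_{0}^{0}$ and $\int_{0,0}^{0}$ terms in the special case, matches the paper's.
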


\begin{proof}
We have
\begin{align*}
& \mathbf{d}\int_{s_{1},s_{2},s_{3}}^{t}\omega_{1}\omega_{2}\omega_{3}\\
& =\mathbf{d}\int_{s_{3}}^{t}\left(  \int_{s_{1},s_{2}}^{u}\omega_{1}%
\omega_{2}\right)  \wedge\left(  \omega_{3}\right)  _{u}^{\ast}\mathbf{d}u\\
& =\int_{s_{3}}^{t}\left\{  \left(  \mathbf{d}\int_{s_{1},s_{2}}^{u}\omega
_{1}\omega_{2}\right)  \wedge\left(  \omega_{3}\right)  _{u}^{\ast}+\left(
-1\right)  ^{p_{1}+p_{2}}\left(  \int_{s_{1},s_{2}}^{u}\omega_{1}\omega
_{2}\right)  \wedge\mathbf{d}\left(  \omega_{3}\right)  _{u}^{\ast}\right\}
\mathbf{d}u\\
& \text{[By Proposition \ref{t3.4}]}\\
& =\int_{s_{3}}^{t}\left(  \mathbf{d}\int_{s_{1},s_{2}}^{u}\omega_{1}%
\omega_{2}\right)  \wedge\left(  \omega_{3}\right)  _{u}^{\ast}\mathbf{d}%
u+\left(  -1\right)  ^{p_{1}+p_{2}}\int_{s_{3}}^{t}\left(  \int_{s_{1},s_{2}%
}^{u}\omega_{1}\omega_{2}\right)  \wedge\mathbf{d}\left(  \omega_{3}\right)
_{u}^{\ast}\mathbf{d}u\\
& =\int_{s_{3}}^{t}\left(
\begin{array}
[c]{c}%
-\int_{s_{1},s_{2}}^{u}\left(  \mathbf{d}\omega_{1}\right)  \omega_{2}+\left(
-1\right)  ^{p_{1}}\int_{s_{1},s_{2}}^{u}\omega_{1}\left(  \mathbf{d}%
\omega_{2}\right)  +\left(  -1\right)  ^{p_{1}}\int_{s_{2}}^{u}\omega
_{1}\wedge\omega_{2}\\
-\varphi_{s_{1}}^{\ast}\omega_{1}\wedge\int_{s_{2}}^{u}\omega_{2}+\left(
-1\right)  ^{p_{1}}\left(  \int_{s_{1}}^{s_{2}}\omega_{1}\right)
\wedge\varphi_{s_{2}}^{\ast}\omega_{2}\\
-\left(  -1\right)  ^{p_{1}}\left(  \int_{s_{1}}^{u}\omega_{1}\right)
\wedge\varphi_{u}^{\ast}\omega_{2}%
\end{array}
\right)  \wedge\left(  \omega_{3}\right)  _{u}^{\ast}\mathbf{d}u\\
& +\left(  -1\right)  ^{p_{1}+p_{2}}\int_{s_{3}}^{t}\left(  \int_{s_{1},s_{2}%
}^{u}\omega_{1}\omega_{2}\right)  \wedge\left(  \iota_{u}^{\ast}%
\mathbf{L}_{\frac{\partial}{\partial t}}\varphi^{\ast}\omega_{3}-\left(
\mathbf{d}\omega_{3}\right)  _{u}^{\ast}\right)  \mathbf{d}u\\
& \text{[By Theorem \ref{t4.6} and Proposition \ref{t3.2}]}\\
& =-\int_{s_{1},s_{2},s_{3}}^{t}\left(  \mathbf{d}\omega_{1}\right)
\omega_{2}\omega_{3}+\left(  -1\right)  ^{p_{1}}\int_{s_{1},s_{2},s_{3}}%
^{t}\omega_{1}\left(  \mathbf{d}\omega_{2}\right)  \omega_{3}-\left(
-1\right)  ^{p_{1}+p_{2}}\int_{s_{1},s_{2},s_{3}}^{t}\omega_{1}\omega
_{2}\left(  \mathbf{d}\omega_{3}\right) \\
& +\left(  -1\right)  ^{p_{1}}\int_{s_{2},s_{3}}^{t}\left(  \omega_{1}%
\wedge\omega_{2}\right)  \omega_{3}-\varphi_{s_{1}}^{\ast}\omega_{1}\wedge
\int_{s_{2},s_{3}}^{t}\omega_{2}\omega_{3}\\
& +\left(  -1\right)  ^{p_{1}}\left(  \int_{s_{1}}^{s_{2}}\omega_{1}\right)
\wedge\varphi_{s_{2}}^{\ast}\omega_{2}\wedge\int_{s_{3}}^{t}\omega_{3}-\left(
-1\right)  ^{p_{1}}\int_{s_{3}}^{t}\left(  \int_{s_{1}}^{u}\omega_{1}\right)
\wedge\varphi_{u}^{\ast}\omega_{2}\wedge\left(  \omega_{3}\right)  _{u}^{\ast
}\mathbf{d}u\\
& +\left(  -1\right)  ^{p_{1}+p_{2}}\int_{s_{3}}^{t}\left(  \int_{s_{1},s_{2}%
}^{u}\omega_{1}\omega_{2}\right)  \wedge\left(  \iota_{u}^{\ast}%
\mathbf{L}_{\frac{\partial}{\partial t}}\varphi^{\ast}\omega_{3}\right)
\mathbf{d}u\\
& =-\int_{s_{1},s_{2},s_{3}}^{t}\left(  \mathbf{d}\omega_{1}\right)
\omega_{2}\omega_{3}+\left(  -1\right)  ^{p_{1}}\int_{s_{1},s_{2},s_{3}}%
^{t}\omega_{1}\left(  \mathbf{d}\omega_{2}\right)  \omega_{3}-\left(
-1\right)  ^{p_{1}+p_{2}}\int_{s_{1},s_{2},s_{3}}^{t}\omega_{1}\omega
_{2}\left(  \mathbf{d}\omega_{3}\right) \\
& +\left(  -1\right)  ^{p_{1}}\int_{s_{2},s_{3}}^{t}\left(  \omega_{1}%
\wedge\omega_{2}\right)  \omega_{3}-\varphi_{s_{1}}^{\ast}\omega_{1}\wedge
\int_{s_{2},s_{3}}^{t}\omega_{2}\omega_{3}\\
& +\left(  -1\right)  ^{p_{1}}\left(  \int_{s_{1}}^{s_{2}}\omega_{1}\right)
\wedge\varphi_{s_{2}}^{\ast}\omega_{2}\wedge\int_{s_{3}}^{t}\omega_{3}-\left(
-1\right)  ^{p_{1}}\int_{s_{3}}^{t}\left(  \int_{s_{1}}^{u}\omega_{1}\right)
\wedge\varphi_{u}^{\ast}\omega_{2}\wedge\left(  \omega_{3}\right)  _{u}^{\ast
}\mathbf{d}u\\
& +\left(  -1\right)  ^{p_{1}+p_{2}}\left\{
\begin{array}
[c]{c}%
-\int_{s_{3}}^{t}\left(  \int_{s_{1}}^{u}\omega_{1}\right)  \wedge\left(
\omega_{2}\right)  _{u}^{\ast}\wedge\varphi_{u}^{\ast}\omega_{3}\mathbf{d}u\\
+\left(  \int_{s_{1},s_{2}}^{t}\omega_{1}\omega_{2}\right)  \wedge\varphi
_{t}^{\ast}\omega_{3}-\left(  \int_{s_{1},s_{2}}^{s_{3}}\omega_{1}\omega
_{2}\right)  \wedge\varphi_{s_{3}}^{\ast}\omega_{3}%
\end{array}
\right\} \\
& \text{[By Lemma \ref{t4.7}]}%
\end{align*}
we keep on.
\begin{align*}
& =-\int_{s_{1},s_{2},s_{3}}^{t}\left(  \mathbf{d}\omega_{1}\right)
\omega_{2}\omega_{3}+\left(  -1\right)  ^{p_{1}}\int_{s_{1},s_{2},s_{3}}%
^{t}\omega_{1}\left(  \mathbf{d}\omega_{2}\right)  \omega_{3}-\left(
-1\right)  ^{p_{1}+p_{2}}\int_{s_{1},s_{2},s_{3}}^{t}\omega_{1}\omega
_{2}\left(  \mathbf{d}\omega_{3}\right) \\
& +\left(  -1\right)  ^{p_{1}}\int_{s_{2},s_{3}}^{t}\left(  \omega_{1}%
\wedge\omega_{2}\right)  \omega_{3}\\
& -\left(  -1\right)  ^{p_{1}+p_{2}}\left(
\begin{array}
[c]{c}%
\int_{s_{3}}^{t}\left(  \int_{s_{1}}^{u}\omega_{1}\right)  \wedge\left(
\omega_{2}\right)  _{u}^{\ast}\wedge\varphi_{u}^{\ast}\omega_{3}\mathbf{d}u\\
+\left(  -1\right)  ^{p_{2}}\int_{s_{3}}^{t}\left(  \int_{s_{1}}^{u}\omega
_{1}\right)  \wedge\varphi_{u}^{\ast}\omega_{2}\wedge\left(  \omega
_{3}\right)  _{u}^{\ast}\mathbf{d}u
\end{array}
\right) \\
& -\varphi_{s_{1}}^{\ast}\omega_{1}\wedge\int_{s_{2},s_{3}}^{t}\omega
_{2}\omega_{3}+\left(  -1\right)  ^{p_{1}}\left(  \int_{s_{1}}^{s_{2}}%
\omega_{1}\right)  \wedge\left(  \varphi_{s_{2}}^{\ast}\omega_{2}\right)
\wedge\left(  \int_{s_{3}}^{t}\omega_{3}\right) \\
& +\left(  -1\right)  ^{p_{1}+p_{2}}\left(  \int_{s_{1},s_{2}}^{t}\omega
_{1}\omega_{2}\right)  \wedge\varphi_{t}^{\ast}\omega_{3}-\left(  -1\right)
^{p_{1}+p_{2}}\left(  \int_{s_{1},s_{2}}^{s_{3}}\omega_{1}\omega_{2}\right)
\wedge\varphi_{s_{3}}^{\ast}\omega_{3}\\
& =-\int_{s_{1},s_{2},s_{3}}^{t}\left(  \mathbf{d}\omega_{1}\right)
\omega_{2}\omega_{3}+\left(  -1\right)  ^{p_{1}}\int_{s_{1},s_{2},s_{3}}%
^{t}\omega_{1}\left(  \mathbf{d}\omega_{2}\right)  \omega_{3}-\left(
-1\right)  ^{p_{1}+p_{2}}\int_{s_{1},s_{2},s_{3}}^{t}\omega_{1}\omega
_{2}\left(  \mathbf{d}\omega_{3}\right) \\
& +\left(  -1\right)  ^{p_{1}}\int_{s_{2},s_{3}}^{t}\left(  \omega_{1}%
\wedge\omega_{2}\right)  \omega_{3}-\left(  -1\right)  ^{p_{1}+p_{2}}%
\int_{s_{1},s_{3}}^{t}\omega_{1}\left(  \omega_{2}\wedge\omega_{3}\right)
-\varphi_{s_{1}}^{\ast}\omega_{1}\wedge\int_{s_{2},s_{3}}^{t}\omega_{2}%
\omega_{3}\\
& +\left(  -1\right)  ^{p_{1}}\left(  \int_{s_{1}}^{s_{2}}\omega_{1}\right)
\wedge\left(  \varphi_{s_{2}}^{\ast}\omega_{2}\right)  \wedge\left(
\int_{s_{3}}^{t}\omega_{3}\right)  +\left(  -1\right)  ^{p_{1}+p_{2}}\left(
\int_{s_{1},s_{2}}^{t}\omega_{1}\omega_{2}\right)  \wedge\varphi_{t}^{\ast
}\omega_{3}\\
& -\left(  -1\right)  ^{p_{1}+p_{2}}\left(  \int_{s_{1},s_{2}}^{s_{3}}%
\omega_{1}\omega_{2}\right)  \wedge\varphi_{s_{3}}^{\ast}\omega_{3}%
\end{align*}

\end{proof}

In general, we have

\begin{theorem}
\label{t4.9}Let $s_{1},...,s_{k},t\in I$. Given $\omega_{i}\in\mathcal{A}%
^{p_{i}}\left(  M\right)  $ with $p_{i}$ being a positive integer ($1\leq
i\leq k$), we have
\begin{align*}
& \mathbf{d}\int_{s_{1},...,s_{3}}^{t}\omega_{1}...\omega_{k}\\
& =\sum_{i=1}^{k}\left(  -1\right)  ^{i+p_{1}+...+p_{i-1}}\int_{s_{1}%
,...,s_{k}}^{t}\omega_{1}...\omega_{i-1}\left(  \mathbf{d}\omega_{i}\right)
\omega_{i+1}...\omega_{k}\\
& +\sum_{i=1}^{k-1}\left(  -1\right)  ^{i+p_{1}+...+p_{i}+1}\int
_{s_{1},...,s_{k}}^{t}\omega_{1}...\omega_{i-1}\left(  \omega_{i}\wedge
\omega_{i+1}\right)  \omega_{i+2}...\omega_{k}\\
& +\sum_{i=1}^{k}\left(  -1\right)  ^{i+p_{1}+...+p_{i-1}}\left(  \int
_{s_{1},...,s_{i-1}}^{s_{i}}\omega_{1}...\omega_{i-1}\right)  \wedge\left(
\varphi_{s_{i}}^{\ast}\omega_{i}\right)  \wedge\left(  \int_{s_{i+1}%
,...,s_{k}}^{t}\omega_{i+1}...\omega_{k}\right) \\
& +\left(  -1\right)  ^{p_{1}+...+p_{k-1}+k+1}\left(  \int_{s_{1},...,s_{k-1}%
}^{t}\omega_{1}...\omega_{k-1}\right)  \wedge\left(  \varphi_{t}^{\ast}%
\omega_{k}\right)
\end{align*}
In particular, we have
\begin{align*}
& \mathbf{d}\int\omega_{1}...\omega_{k}\\
& =\sum_{i=1}^{k}\left(  -1\right)  ^{i+p_{1}+...+p_{i-1}}\int\omega
_{1}...\omega_{i-1}\left(  \mathbf{d}\omega_{i}\right)  \omega_{i+1}%
...\omega_{k}\\
& +\sum_{i=1}^{k-1}\left(  -1\right)  ^{i+p_{1}+...+p_{i}+1}\int\omega
_{1}...\omega_{i-1}\left(  \omega_{i}\wedge\omega_{i+1}\right)  \omega
_{i+2}...\omega_{k}\\
& -\left(  \varphi_{0}^{\ast}\omega_{1}\right)  \wedge\left(  \int\omega
_{2}...\omega_{k}\right) \\
& +\left(  -1\right)  ^{p_{1}+...+p_{k-1}+k+1}\left(  \int\omega_{1}%
...\omega_{k-1}\right)  \wedge\left(  \varphi_{1}^{\ast}\omega_{k}\right)
\end{align*}

\end{theorem}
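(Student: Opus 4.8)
The plan is to argue by induction on $k$, using Corollary \ref{t3.2.2} ($k=1$) and Theorem \ref{t4.6} ($k=2$) as base cases; these, together with the fully worked $k=3$ instance in Theorem \ref{t4.8}, display exactly the shape the inductive step must take. The particular case ($s_1=\dots=s_k=0$, $t=1$) then follows by specialization: the splitting summands with $i\ge 2$ drop out because their prefactor $\int_{s_1,\dots,s_{i-1}}^{s_i}\omega_1\dots\omega_{i-1}$ degenerates to an integral over a single point and hence vanishes, leaving only the $i=1$ splitting term $-\varphi_0^{\ast}\omega_1\wedge\int\omega_2\dots\omega_k$. So I would establish the general identity and read the special case off at the end.

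For the inductive step I would first peel off the outermost layer by the recursive Definition, writing $\int_{s_1,\dots,s_k}^t\omega_1\dots\omega_k=\int_{s_k}^t\bigl(\int_{s_1,\dots,s_{k-1}}^u\omega_1\dots\omega_{k-1}\bigr)\wedge(\omega_k)_u^{\ast}\,\mathbf{d}u$. Applying $\mathbf{d}$, commuting it past the integral by Proposition \ref{t3.4}, and expanding by the graded Leibniz rule (the inner iterated integral has degree $p_1+\dots+p_{k-1}-(k-1)$, contributing the sign $(-1)^{p_1+\dots+p_{k-1}+k-1}$) produces two families of integrands: one built from $\mathbf{d}\int_{s_1,\dots,s_{k-1}}^u\omega_1\dots\omega_{k-1}$ and one from $\mathbf{d}(\omega_k)_u^{\ast}$. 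To the former I apply the inductive hypothesis for length $k-1$; to the latter I apply Proposition \ref{t3.2} to rewrite $\mathbf{d}(\omega_k)_u^{\ast}=\iota_u^{\ast}\mathbf{L}_{\frac{\partial}{\partial t}}\varphi^{\ast}\omega_k-(\mathbf{d}\omega_k)_u^{\ast}$.

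The decisive device is a length-$k$ analogue of Lemmas \ref{t4.5} and \ref{t4.7}, proved in the same way: compute $\mathbf{D}_u$ of $\bigl(\int_{s_1,\dots,s_{k-1}}^u\omega_1\dots\omega_{k-1}\bigr)\wedge\varphi_u^{\ast}\omega_k$ by Proposition \ref{t4.4}, evaluating $\mathbf{D}_u$ of the inner iterated integral through Proposition \ref{t4.2} and $\mathbf{D}_u\varphi_u^{\ast}\omega_k$ through Proposition \ref{t4.1}, then integrate by Proposition \ref{t4.3}. This identity lets me exchange the surviving $\mathbf{L}$-integral for the two boundary terms $\bigl(\int_{s_1,\dots,s_{k-1}}^t\omega_1\dots\omega_{k-1}\bigr)\wedge\varphi_t^{\ast}\omega_k$ and $\bigl(\int_{s_1,\dots,s_{k-1}}^{s_k}\omega_1\dots\omega_{k-1}\bigr)\wedge\varphi_{s_k}^{\ast}\omega_k$, minus a mixed integrand carrying $(\omega_{k-1})_u^{\ast}\wedge\varphi_u^{\ast}\omega_k$. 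Against this last integrand I pit the mixed integrand $\varphi_u^{\ast}\omega_{k-1}\wedge(\omega_k)_u^{\ast}$ that the final (endpoint) summand of the inductive hypothesis contributes, and merge the two into a single $(\omega_{k-1}\wedge\omega_k)_u^{\ast}$ via Proposition \ref{t3.1}; this manufactures precisely the $i=k-1$ wedge summand.

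Collecting the pieces, the $\mathbf{d}\omega_i$ summands for $1\le i\le k-1$, the wedge summands for $1\le i\le k-2$, and the splitting summands for $1\le i\le k-1$ descend from the inductive hypothesis (the outer $\int_{s_k}^t(\,\cdot\,)\wedge(\omega_k)_u^{\ast}\,\mathbf{d}u$ simply extending each inner iterated integral by one more factor $\omega_k$); the $i=k$ derivative summand comes from $-(\mathbf{d}\omega_k)_u^{\ast}$, the $i=k-1$ wedge summand and the $i=k$ splitting summand come from the merge and the $\varphi_{s_k}^{\ast}$ boundary term respectively, and the trailing endpoint term is the $\varphi_t^{\ast}$ boundary term. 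The only real obstacle is the sign-and-index bookkeeping: one must check that the Leibniz sign $(-1)^{p_1+\dots+p_{k-1}+k-1}$, the reindexing forced when the hypothesis is inserted under the outer integral, and the $(-1)^{p_{k-1}}$ emitted by Proposition \ref{t3.1} conspire to reproduce the advertised exponents $i+p_1+\dots+p_{i-1}$, $i+p_1+\dots+p_i+1$ and $p_1+\dots+p_{k-1}+k+1$ exactly---this is the step that rendered the $k=3$ computation so lengthy, and it is the place where an error would hide.
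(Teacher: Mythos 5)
Your proposal is correct and is essentially the paper's own approach: the paper in fact states Theorem \ref{t4.9} \emph{without proof}, offering it as the evident generalization of its fully worked cases $k=2$ (Theorem \ref{t4.6}) and $k=3$ (Theorem \ref{t4.8}), and your induction --- peel off the outer integral by the recursive definition, commute $\mathbf{d}$ inside by Proposition \ref{t3.4}, expand by the graded Leibniz rule and Proposition \ref{t3.2}, insert the inductive hypothesis, trade the Lie-derivative integral for the two boundary terms via the length-$k$ analogue of Lemmas \ref{t4.5} and \ref{t4.7}, and merge the two mixed integrands by Proposition \ref{t3.1} --- is precisely that generalization. The sign bookkeeping you flag as the danger point does close: the Leibniz sign is $\left(-1\right)^{p_{1}+...+p_{k-1}+k-1}$, the endpoint and $\varphi_{s_{k}}^{\ast}$ boundary terms then carry $\left(-1\right)^{p_{1}+...+p_{k-1}+k+1}$ and $\left(-1\right)^{p_{1}+...+p_{k-1}+k}$ as required, and since $\left(-1\right)^{p_{1}+...+p_{k-1}+p_{k-1}}=\left(-1\right)^{p_{1}+...+p_{k-2}}$, the mixed term emitted by the Lie-derivative lemma (sign $\left(-1\right)^{p_{1}+...+p_{k-1}+k}$) and the one inherited from the last summand of the inductive hypothesis (sign $\left(-1\right)^{p_{1}+...+p_{k-2}+k}$) combine under Proposition \ref{t3.1} into $\left(-1\right)^{p_{1}+...+p_{k-1}+k}\int_{s_{k}}^{t}\left(  \int_{s_{1},...,s_{k-2}}^{u}\omega_{1}...\omega_{k-2}\right)  \wedge\left(  \omega_{k-1}\wedge\omega_{k}\right)  _{u}^{\ast}\mathbf{d}u$, whose sign agrees in parity with the advertised exponent $\left(k-1\right)+p_{1}+...+p_{k-1}+1$ of the $i=k-1$ wedge summand.
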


\begin{corollary}
\label{t4.9.1}Given $\omega_{i}\in\mathcal{A}^{p_{i}}\left(  M\right)  $
($0\leq i\leq k+1$)\ with $p_{i}$ being a positive integer ($1\leq i\leq k$)
and $p_{0}$\ and $p_{k+1}$\ being non-negative integers, we have
\begin{align*}
& \mathbf{d}\left(  \left(  \varphi_{0}^{\ast}\omega_{0}\right)  \wedge\left(
\int\omega_{1}...\omega_{k}\right)  \wedge\left(  \varphi_{1}^{\ast}%
\omega_{k+1}\right)  \right) \\
& =\left(  \varphi_{0}^{\ast}\left(  \mathbf{d}\omega_{0}\right)  \right)
\wedge\left(  \int\omega_{1}...\omega_{k}\right)  \wedge\left(  \varphi
_{1}^{\ast}\omega_{k+1}\right) \\
& +\sum_{i=1}^{k}\left(  -1\right)  ^{i+p_{0}+p_{1}+...+p_{i-1}}\left(
\varphi_{0}^{\ast}\omega_{0}\right)  \wedge\left(  \int\omega_{1}%
...\omega_{i-1}\left(  \mathbf{d}\omega_{i}\right)  \omega_{i+1}...\omega
_{k}\right)  \wedge\left(  \varphi_{1}^{\ast}\omega_{k+1}\right) \\
& +\sum_{i=1}^{k-1}\left(  -1\right)  ^{i+p_{0}+p_{1}+...+p_{i}+1}\left(
\varphi_{0}^{\ast}\omega_{0}\right)  \wedge\left(  \int\omega_{1}%
...\omega_{i-1}\left(  \omega_{i}\wedge\omega_{i+1}\right)  \omega
_{i+2}...\omega_{k}\right)  \wedge\left(  \varphi_{1}^{\ast}\omega
_{k+1}\right) \\
& -\left(  -1\right)  ^{p_{0}}\left(  \varphi_{0}^{\ast}\left(  \omega
_{0}\wedge\omega_{1}\right)  \right)  \wedge\left(  \int\omega_{2}%
...\omega_{k}\right)  \wedge\left(  \varphi_{1}^{\ast}\omega_{k+1}\right) \\
& +\left(  -1\right)  ^{p_{0}+p_{1}+...+p_{k-1}+k+1}\left(  \int\omega
_{1}...\omega_{k-1}\right)  \wedge\left(  \varphi_{1}^{\ast}\left(  \omega
_{k}\wedge\omega_{k+1}\right)  \right) \\
& +\left(  -1\right)  ^{p_{0}+p_{1}+...+p_{k}+k}\left(  \varphi_{0}^{\ast
}\omega_{0}\right)  \wedge\left(  \int\omega_{1}...\omega_{k}\right)
\wedge\left(  \varphi_{1}^{\ast}\mathbf{d}\omega_{k+1}\right)
\end{align*}
so that the graded submodule of the de Rham module on $\mathcal{P}M$ linearly
generated by differential forms of the form $\left(  \varphi_{0}^{\ast}%
\omega_{0}\right)  \wedge\left(  \int\omega_{1}...\omega_{k}\right)
\wedge\left(  \varphi_{1}^{\ast}\omega_{k+1}\right)  $ is closed under
exterior differentiation, constituting a subcomplex of the de Rham complex
$\mathcal{A}\left(  \mathcal{P}M\right)  $.
\end{corollary}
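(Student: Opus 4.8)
The plan is to reduce everything to the already-established formula of Theorem \ref{t4.9} by means of the graded Leibniz rule. Writing $\alpha=\varphi_{0}^{\ast}\omega_{0}$, $\beta=\int\omega_{1}...\omega_{k}$ and $\gamma=\varphi_{1}^{\ast}\omega_{k+1}$, whose degrees are $p_{0}$, $p_{1}+...+p_{k}-k$ and $p_{k+1}$ respectively, I would first expand
\[
\mathbf{d}\left(  \alpha\wedge\beta\wedge\gamma\right)  =\left(  \mathbf{d}\alpha\right)  \wedge\beta\wedge\gamma+\left(  -1\right)  ^{p_{0}}\alpha\wedge\left(  \mathbf{d}\beta\right)  \wedge\gamma+\left(  -1\right)  ^{p_{0}+p_{1}+...+p_{k}-k}\alpha\wedge\beta\wedge\left(  \mathbf{d}\gamma\right)  .
\]
Since exterior differentiation commutes with pullback, $\mathbf{d}\alpha=\varphi_{0}^{\ast}\left(  \mathbf{d}\omega_{0}\right)  $ and $\mathbf{d}\gamma=\varphi_{1}^{\ast}\left(  \mathbf{d}\omega_{k+1}\right)  $, which yields at once the first and last lines of the asserted formula, using $\left(  -1\right)  ^{-k}=\left(  -1\right)  ^{k}$ to rewrite the sign on the last term as $\left(  -1\right)  ^{p_{0}+...+p_{k}+k}$.

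The heart of the computation is the middle term, where I would substitute the ``in particular'' form of Theorem \ref{t4.9} for $\mathbf{d}\beta$, producing four families of summands. The two families involving $\mathbf{d}\omega_{i}$ and $\omega_{i}\wedge\omega_{i+1}$ leave the outer factors $\varphi_{0}^{\ast}\omega_{0}$ and $\varphi_{1}^{\ast}\omega_{k+1}$ untouched, and after absorbing the $\left(  -1\right)  ^{p_{0}}$ prefactor into the signs already present they reproduce the second and third lines of the statement. The two remaining boundary terms of Theorem \ref{t4.9}, namely $-\left(  \varphi_{0}^{\ast}\omega_{1}\right)  \wedge\left(  \int\omega_{2}...\omega_{k}\right)  $ and $\left(  -1\right)  ^{p_{1}+...+p_{k-1}+k+1}\left(  \int\omega_{1}...\omega_{k-1}\right)  \wedge\left(  \varphi_{1}^{\ast}\omega_{k}\right)  $, each acquire an adjacent pullback factor from $\alpha$ on the far left and from $\gamma$ on the far right. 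Since each $\varphi_{t}^{\ast}$ is a homomorphism of exterior algebras, I would collapse $\left(  \varphi_{0}^{\ast}\omega_{0}\right)  \wedge\left(  \varphi_{0}^{\ast}\omega_{1}\right)  =\varphi_{0}^{\ast}\left(  \omega_{0}\wedge\omega_{1}\right)  $ and $\left(  \varphi_{1}^{\ast}\omega_{k}\right)  \wedge\left(  \varphi_{1}^{\ast}\omega_{k+1}\right)  =\varphi_{1}^{\ast}\left(  \omega_{k}\wedge\omega_{k+1}\right)  $, giving the fourth and fifth lines (the fifth one still carrying $\varphi_{0}^{\ast}\omega_{0}$ as its leftmost factor, untouched by the fusion on the right).

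The only genuine difficulty is the sign bookkeeping. One must carry the $\left(  -1\right)  ^{p_{0}}$ prefactor — and, for the third Leibniz summand, the full degree $p_{0}+p_{1}+...+p_{k}-k$ of $\alpha\wedge\beta$ — through every term, and check that fusing two neighbouring pullbacks into a single $\varphi_{t}^{\ast}$ of a wedge introduces no sign beyond the ones displayed (the fused factors being physically adjacent, no transposition of graded forms is needed). I expect this to be routine but error-prone, so I would organize it by treating the four families of Theorem \ref{t4.9} one at a time and reconciling each against the corresponding line of the statement.

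Finally, for the closure assertion I would simply observe that every summand produced above is again of the generating shape $\left(  \varphi_{0}^{\ast}\eta_{0}\right)  \wedge\left(  \int\eta_{1}...\eta_{m}\right)  \wedge\left(  \varphi_{1}^{\ast}\eta_{m+1}\right)  $: the inner integrand forms all retain positive degree, each being one of the original $\omega_{i}$, an exterior derivative $\mathbf{d}\omega_{i}$, or a wedge $\omega_{i}\wedge\omega_{i+1}$, while the two outer forms are pullbacks under $\varphi_{0}$ or $\varphi_{1}$ of $\omega_{0},\omega_{0}\wedge\omega_{1},\mathbf{d}\omega_{0}$ on the left and of $\omega_{k+1},\omega_{k}\wedge\omega_{k+1},\mathbf{d}\omega_{k+1}$ on the right. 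Hence $\mathbf{d}$ carries each generator, and therefore by linearity their whole linear span, back into that span; and since $\mathbf{d}^{2}=0$ holds in the ambient module $\mathcal{A}\left(  \mathcal{P}M\right)  $, the restriction of $\mathbf{d}$ equips this graded submodule with the structure of a subcomplex of the de Rham complex.
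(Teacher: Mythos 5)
Your proposal is correct and is precisely the argument the paper leaves implicit (no proof is printed for this corollary): apply the graded Leibniz rule to $\left(\varphi_{0}^{\ast}\omega_{0}\right)\wedge\left(\int\omega_{1}...\omega_{k}\right)\wedge\left(\varphi_{1}^{\ast}\omega_{k+1}\right)$, substitute the ``in particular'' formula of Theorem \ref{t4.9} for the middle factor, and use that $\mathbf{d}$ commutes with $\varphi_{0}^{\ast}$, $\varphi_{1}^{\ast}$ and that these pullbacks preserve wedge products. Your parenthetical observation that the fifth line must still carry $\varphi_{0}^{\ast}\omega_{0}$ as its leftmost factor is also right: the corollary as printed omits that factor, which is a typo --- without it the term has degree short by $p_{0}$, whereas the printed sign $\left(-1\right)^{p_{0}+p_{1}+...+p_{k-1}+k+1}$ already includes the Leibniz prefactor $\left(-1\right)^{p_{0}}$, exactly as in your derivation.
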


\begin{definition}
The subcomplex in the above corollary is called the Hochschild complex
associated with the de Rham complex $\mathcal{A}\left(  M\right)  $.
\end{definition}

We conclude this section with the following simple proposition.

\begin{proposition}
\label{t4.10}Let $s_{1},s_{2},s_{1}^{\prime},s_{2}^{\prime},t,t^{\prime}\in
I$. Let $\omega_{1}\in\mathcal{A}^{p_{1}}\left(  M\right)  $ and $\omega
_{2}\in\mathcal{A}^{p_{2}}\left(  M\right)  $. We have
\begin{align*}
\int_{s_{1},s_{2}}^{t}\omega_{1}\omega_{2}  & =\int_{s_{1}^{\prime},s_{2}}%
^{t}\omega_{1}\omega_{2}+\left(  \int_{s_{1}}^{s_{1}^{\prime}}\omega
_{1}\right)  \wedge\left(  \int_{s_{2}}^{t}\omega_{2}\right) \\
\int_{s_{1},s_{2}}^{t}\omega_{1}\omega_{2}  & =\int_{s_{1},s_{2}}%
^{s_{2}^{\prime}}\omega_{1}\omega_{2}+\int_{s_{1},s_{2}^{\prime}}^{t}%
\omega_{1}\omega_{2}\\
\int_{s_{1},s_{2}}^{t}\omega_{1}\omega_{2}  & =\int_{s_{1},s_{2}}^{t^{\prime}%
}\omega_{1}\omega_{2}+\int_{s_{1},t^{\prime}}^{t}\omega_{1}\omega_{2}%
\end{align*}

\end{proposition}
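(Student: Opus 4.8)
The plan is to derive all three identities from the additivity of integration in its limits together with the bilinearity of the wedge product, using only Proposition \ref{t3.3}, the defining formula for $\int_{s_1,s_2}^t\omega_1\omega_2$, and the pairing definition of the form-valued integral $\int_s^t\widetilde\omega(u)\mathbf{d}u$.

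For the first identity I would begin by applying Proposition \ref{t3.3} to the \emph{inner} single integral, writing $\int_{s_1}^u\omega_1=\int_{s_1}^{s_1'}\omega_1+\int_{s_1'}^u\omega_1$ (the degenerate segment $\int_u^u\omega_1$ vanishing). Substituting this into the definition $\int_{s_1,s_2}^t\omega_1\omega_2=\int_{s_2}^t\left(\left(\int_{s_1}^u\omega_1\right)\wedge(\omega_2)_u^*\right)\mathbf{d}u$ and distributing $\wedge(\omega_2)_u^*$ across the sum splits the expression into two outer integrals. The term carrying $\int_{s_1'}^u\omega_1$ is by definition $\int_{s_1',s_2}^t\omega_1\omega_2$. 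In the remaining term the factor $\int_{s_1}^{s_1'}\omega_1$ is independent of the integration variable $u$, so I would pull it out of the wedge and out of the outer integral, arriving at $\left(\int_{s_1}^{s_1'}\omega_1\right)\wedge\int_{s_2}^t(\omega_2)_u^*\mathbf{d}u$, which equals $\left(\int_{s_1}^{s_1'}\omega_1\right)\wedge\left(\int_{s_2}^t\omega_2\right)$ by the definition of Chen's single integral.

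The second and third identities are both instances of additivity of the \emph{outer} integral. Since Proposition \ref{t3.3} is stated for single Chen integrals of forms on $M$, for the outer integral I instead invoke the analogous additivity of the form-valued integral $\int_s^t\widetilde\omega(u)\mathbf{d}u$; unwinding its pairing definition reduces this to additivity of ordinary real-valued integration in $u$, which is available in the synthetic setting. Splitting $\int_{s_2}^t$ at the intermediate bound then finishes both: splitting at $s_2'$ yields $\int_{s_1,s_2}^{s_2'}\omega_1\omega_2+\int_{s_1,s_2'}^t\omega_1\omega_2$, and splitting at $t'$ yields $\int_{s_1,s_2}^{t'}\omega_1\omega_2+\int_{s_1,t'}^t\omega_1\omega_2$, each being exactly the claimed right-hand side once the pieces are rewritten in iterated-integral notation.

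The one step that is not pure bookkeeping is the extraction of the $u$-independent form $\int_{s_1}^{s_1'}\omega_1$ through the wedge and past the outer integral in the first identity. This rests on the left linearity $\int_s^t\left(\alpha\wedge\widetilde\omega(u)\right)\mathbf{d}u=\alpha\wedge\int_s^t\widetilde\omega(u)\mathbf{d}u$ for a fixed form $\alpha$, which I would verify directly from the pairing definition of the form-valued integral together with the characterization of the wedge product via the natural pairing $\langle\,\cdot\,,\cdot\,\rangle$ on $M^{D^p}\times D^p$ recalled earlier. Everything else is immediate from Proposition \ref{t3.3} and bilinearity, which is consonant with the proposition being labelled a simple one.
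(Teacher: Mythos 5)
Your proposal matches the paper's own proof essentially step for step: the first identity by splitting the inner integral via Proposition \ref{t3.3} and pulling the $u$-independent factor $\int_{s_{1}}^{s_{1}'}\omega_{1}$ out through the wedge and the outer integral, and the second and third by additivity of the outer form-valued integral in its limits, reduced via the pairing definition to ordinary additivity of integration. The only difference is that you make explicit the left-linearity step $\int_{s}^{t}\left(\alpha\wedge\widetilde{\omega}(u)\right)\mathbf{d}u=\alpha\wedge\int_{s}^{t}\widetilde{\omega}(u)\mathbf{d}u$, which the paper uses silently.
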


\begin{proof}
Here we deal only with the first formula. We have
\begin{align*}
& \int_{s_{1},s_{2}}^{t}\omega_{1}\omega_{2}\\
& =\int_{s_{2}}^{t}\left(  \int_{s_{1}}^{u}\omega_{1}\right)  \wedge\left(
\omega_{2}\right)  _{u}^{\ast}\mathbf{d}u\\
& =\int_{s_{2}}^{t}\left(  \int_{s_{1}^{\prime}}^{u}\omega_{1}+\int_{s_{1}%
}^{s_{1}^{\prime}}\omega_{1}\right)  \wedge\left(  \omega_{2}\right)
_{u}^{\ast}\mathbf{d}u\\
& =\int_{s_{2}}^{t}\left(  \int_{s_{1}^{\prime}}^{u}\omega_{1}\right)
\wedge\left(  \omega_{2}\right)  _{u}^{\ast}\mathbf{d}u+\int_{s_{2}}%
^{t}\left(  \int_{s_{1}}^{s_{1}^{\prime}}\omega_{1}\right)  \wedge\left(
\omega_{2}\right)  _{u}^{\ast}\mathbf{d}u\\
& =\int_{s_{1},s_{2}^{\prime}}^{t}\omega_{1}\omega_{2}+\left(  \int_{s_{1}%
}^{s_{1}^{\prime}}\omega_{1}\right)  \wedge\left(  \int_{s_{2}}^{t}\omega
_{2}\right)
\end{align*}
\begin{align*}
& \int_{s_{1},s_{2}}^{t}\omega_{1}\omega_{2}\\
& =\int_{s_{2}}^{t}\left(  \int_{s_{1}}^{u}\omega_{1}\right)  \wedge\left(
\omega_{2}\right)  _{u}^{\ast}\mathbf{d}u\\
& =\int_{s_{2}}^{s_{2}^{\prime}}\left(  \int_{s_{1}}^{u}\omega_{1}\right)
\wedge\left(  \omega_{2}\right)  _{u}^{\ast}\mathbf{d}u+\int_{s_{2}^{\prime}%
}^{t}\left(  \int_{s_{1}}^{u}\omega_{1}\right)  \wedge\left(  \omega
_{2}\right)  _{u}^{\ast}\mathbf{d}u\\
& =\int_{s_{1},s_{2}}^{s_{2}^{\prime}}\omega_{1}\omega_{2}+\int_{s_{1}%
,s_{2}^{\prime}}^{t}\omega_{1}\omega_{2}%
\end{align*}

\end{proof}

\begin{corollary}
\label{t4.10.1}
\begin{align*}
& \int_{s_{1},s_{2}}^{t}\omega_{1}\omega_{2}\\
& =\int_{s_{1}^{\prime},s_{2}}^{s_{2}^{\prime}}\omega_{1}\omega_{2}%
+\int_{s_{1}^{\prime},s_{2}^{\prime}}^{t^{\prime}}\omega_{1}\omega_{2}%
+\int_{s_{1}^{\prime},t^{\prime}}^{t}\omega_{1}\omega_{2}\\
& +\left(  \int_{s_{1}}^{s_{1}^{\prime}}\omega_{1}\right)  \wedge\left(
\int_{s_{2}}^{s_{2}^{\prime}}\omega_{2}\right)  +\left(  \int_{s_{1}}%
^{s_{1}^{\prime}}\omega_{1}\right)  \wedge\left(  \int_{s_{2}^{\prime}%
}^{t^{\prime}}\omega_{2}\right)  +\left(  \int_{s_{1}}^{s_{1}^{\prime}}%
\omega_{1}\right)  \wedge\left(  \int_{t^{\prime}}^{t}\omega_{2}\right)
\end{align*}

\end{corollary}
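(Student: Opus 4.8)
The plan is to derive the identity purely formally from the three splitting formulas of Proposition \ref{t4.10}, together with the additivity of the simple integral (Proposition \ref{t3.3}) and the bilinearity of the wedge product. No new infinitesimal computation is required; everything reduces to careful bookkeeping of the intermediate endpoints, so the proof is a matter of assembling results already in hand rather than of any fresh estimate.

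First I would peel off the $s_{1}\to s_{1}^{\prime}$ correction by applying the first formula of Proposition \ref{t4.10}, giving
\[
\int_{s_{1},s_{2}}^{t}\omega_{1}\omega_{2}=\int_{s_{1}^{\prime},s_{2}}^{t}\omega_{1}\omega_{2}+\left(\int_{s_{1}}^{s_{1}^{\prime}}\omega_{1}\right)\wedge\left(\int_{s_{2}}^{t}\omega_{2}\right).
\]
This already isolates the single wedge factor $\int_{s_{1}}^{s_{1}^{\prime}}\omega_{1}$ that occurs in all three product terms on the right-hand side of the corollary.

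Next I would split the outer integration range of $\int_{s_{1}^{\prime},s_{2}}^{t}\omega_{1}\omega_{2}$ into the three consecutive pieces. Applying the second formula of Proposition \ref{t4.10} with $s_{1}$ replaced by $s_{1}^{\prime}$ breaks off $\int_{s_{1}^{\prime},s_{2}}^{s_{2}^{\prime}}\omega_{1}\omega_{2}$, and applying the third formula (with $s_{1},s_{2}$ replaced by $s_{1}^{\prime},s_{2}^{\prime}$) to the remainder $\int_{s_{1}^{\prime},s_{2}^{\prime}}^{t}\omega_{1}\omega_{2}$ splits it at $t^{\prime}$; together these yield precisely the three iterated-integral terms $\int_{s_{1}^{\prime},s_{2}}^{s_{2}^{\prime}}+\int_{s_{1}^{\prime},s_{2}^{\prime}}^{t^{\prime}}+\int_{s_{1}^{\prime},t^{\prime}}^{t}$ of the statement.

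Finally I would expand the surviving simple integral $\int_{s_{2}}^{t}\omega_{2}$ by Proposition \ref{t3.3} as $\int_{s_{2}}^{s_{2}^{\prime}}\omega_{2}+\int_{s_{2}^{\prime}}^{t^{\prime}}\omega_{2}+\int_{t^{\prime}}^{t}\omega_{2}$ and distribute the wedge $\left(\int_{s_{1}}^{s_{1}^{\prime}}\omega_{1}\right)\wedge(-)$ over this sum, producing the three remaining product terms; combining the two computations yields the claimed identity. The only point requiring attention — and the main, mild, obstacle — is matching the endpoints consistently: the iterated integrals are split at $s_{2}^{\prime}$ and then $t^{\prime}$ in the outer variable, while the simple integral is split at $s_{2}^{\prime}$ and $t^{\prime}$ in its own variable, so one must keep the dual roles of $s_{2}^{\prime}$ and $t^{\prime}$ straight across the two halves of the argument.
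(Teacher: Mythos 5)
Your proposal is correct and follows essentially the same route as the paper's own proof: peel off the $s_{1}\to s_{1}^{\prime}$ correction with the first formula of Proposition \ref{t4.10}, split the resulting iterated integral at $s_{2}^{\prime}$ and then at $t^{\prime}$ with the second and third formulas, and expand $\int_{s_{2}}^{t}\omega_{2}$ by Proposition \ref{t3.3} before distributing the wedge. The only difference is the order in which these steps are written out, which is immaterial.
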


\begin{proof}
We have
\begin{align*}
& \int_{s_{1},s_{2}}^{t}\omega_{1}\omega_{2}\\
& =\int_{s_{1}^{\prime},s_{2}}^{t}\omega_{1}\omega_{2}+\left(  \int_{s_{1}%
}^{s_{1}^{\prime}}\omega_{1}\right)  \wedge\left(  \int_{s_{2}}^{t}\omega
_{2}\right) \\
& =\int_{s_{1}^{\prime},s_{2}}^{s_{2}^{\prime}}\omega_{1}\omega_{2}%
+\int_{s_{1}^{\prime},s_{2}^{\prime}}^{t}\omega_{1}\omega_{2}\\
& +\left(  \int_{s_{1}}^{s_{1}^{\prime}}\omega_{1}\right)  \wedge\left(
\int_{s_{2}}^{s_{2}^{\prime}}\omega_{2}\right)  +\left(  \int_{s_{1}}%
^{s_{1}^{\prime}}\omega_{1}\right)  \wedge\left(  \int_{s_{2}^{\prime}%
}^{t^{\prime}}\omega_{2}\right)  +\left(  \int_{s_{1}}^{s_{1}^{\prime}}%
\omega_{1}\right)  \wedge\left(  \int_{t^{\prime}}^{t}\omega_{2}\right) \\
& =\int_{s_{1}^{\prime},s_{2}}^{s_{2}^{\prime}}\omega_{1}\omega_{2}%
+\int_{s_{1}^{\prime},s_{2}^{\prime}}^{t^{\prime}}\omega_{1}\omega_{2}%
+\int_{s_{1}^{\prime},t^{\prime}}^{t}\omega_{1}\omega_{2}\\
& +\left(  \int_{s_{1}}^{s_{1}^{\prime}}\omega_{1}\right)  \wedge\left(
\int_{s_{2}}^{s_{2}^{\prime}}\omega_{2}\right)  +\left(  \int_{s_{1}}%
^{s_{1}^{\prime}}\omega_{1}\right)  \wedge\left(  \int_{s_{2}^{\prime}%
}^{t^{\prime}}\omega_{2}\right)  +\left(  \int_{s_{1}}^{s_{1}^{\prime}}%
\omega_{1}\right)  \wedge\left(  \int_{t^{\prime}}^{t}\omega_{2}\right)
\end{align*}

\end{proof}

\section{\label{s5}Based Path Spaces}

Let us begin by fixing our notation.

\begin{notation}
Let $x_{1},x_{2}\in M$. We denote by $\mathcal{P}_{x_{1}}M$, $\mathcal{P}%
^{x_{2}}M$ and $\mathcal{P}_{x_{1}}^{x_{2}}M$ the spaces
\begin{align*}
\mathcal{P}_{x_{1}}M  & =\left\{  \theta\in\mathcal{P}M\mid\theta\left(
0\right)  =x_{1}\right\} \\
\mathcal{P}^{x_{2}}M  & =\left\{  \theta\in\mathcal{P}M\mid\theta\left(
1\right)  =x_{2}\right\} \\
\mathcal{P}_{x_{1}}^{x_{2}}M  & =\left\{  \theta\in\mathcal{P}M\mid
\theta\left(  0\right)  =x_{1}\text{ and }\theta\left(  1\right)
=x_{2}\right\}
\end{align*}

\end{notation}

\begin{notation}
For simplicity, we use the same notation $\varphi:I\times\mathcal{P}_{x_{1}%
}M\rightarrow M$, $\varphi:I\times\mathcal{P}^{x_{2}}M\rightarrow M$ and
$\varphi:I\times\mathcal{P}_{x_{1}}^{x_{2}}M\rightarrow M$ for the
restrictions of $\varphi:I\times\mathcal{P}M\rightarrow M$.
\end{notation}

By replacing $\mathcal{P}M$ by $\mathcal{P}_{x_{0},x_{1}}M$ throughout in the
previous two sections, we have corresponding but simpler results. In
particular, we have

\begin{theorem}
\label{t5.1}Let $x_{1}\in M$. Given $\omega_{i}\in\mathcal{A}^{p_{i}}\left(
M\right)  $ ($1\leq i\leq k+1$)\ with $p_{i}$ being a positive integer ($1\leq
i\leq k$) and $p_{k+1}$\ being a non-negative integer, we have
\begin{align*}
& \mathbf{d}\left(  \left(  \int\omega_{1}...\omega_{k}\right)  \wedge\left(
\varphi_{1}^{\ast}\omega_{k+1}\right)  \right) \\
& =\left(  \int\omega_{1}...\omega_{k}\right)  \wedge\left(  \varphi_{1}%
^{\ast}\omega_{k+1}\right) \\
& +\sum_{i=1}^{k}\left(  -1\right)  ^{i+p_{1}+...+p_{i-1}}\left(  \int
\omega_{1}...\omega_{i-1}\left(  \mathbf{d}\omega_{i}\right)  \omega
_{i+1}...\omega_{k}\right)  \wedge\left(  \varphi_{1}^{\ast}\omega
_{k+1}\right) \\
& +\sum_{i=1}^{k-1}\left(  -1\right)  ^{i+p_{1}+...+p_{i}+1}\left(  \int
\omega_{1}...\omega_{i-1}\left(  \omega_{i}\wedge\omega_{i+1}\right)
\omega_{i+2}...\omega_{k}\right)  \wedge\left(  \varphi_{1}^{\ast}\omega
_{k+1}\right) \\
& +\left(  -1\right)  ^{p_{1}+...+p_{k-1}+k+1}\left(  \int\omega_{1}%
...\omega_{k-1}\right)  \wedge\left(  \varphi_{1}^{\ast}\left(  \omega
_{k}\wedge\omega_{k+1}\right)  \right) \\
& +\left(  -1\right)  ^{p_{1}+...+p_{k}+k}\left(  \int\omega_{1}...\omega
_{k}\right)  \wedge\left(  \varphi_{1}^{\ast}\mathbf{d}\omega_{k+1}\right)
\end{align*}
so that the graded submodule of the de Rham module on $\mathcal{P}_{x_{1}}M$
linearly generated by differential forms of the form $\left(  \int\omega
_{1}...\omega_{k}\right)  \wedge\left(  \varphi_{1}^{\ast}\omega_{k+1}\right)
$ is closed under exterior differentiation, constituting a subcomplex of the
de Rham complex $\mathcal{A}\left(  \mathcal{P}_{x_{1}}M\right)  $.
\end{theorem}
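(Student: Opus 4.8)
The plan is to treat this statement as the based-starting-point analogue of Corollary \ref{t4.9.1}, deducing it from the general formula of Theorem \ref{t4.9} together with the graded Leibniz rule for $\mathbf{d}$. The decisive simplification is geometric: on $\mathcal{P}_{x_1}M$ every path satisfies $\theta(0)=x_1$, so the evaluation map $\varphi_0:\mathcal{P}_{x_1}M\rightarrow M$, $\theta\mapsto\theta(0)$, is the constant map at $x_1$. Consequently $\varphi_0^\ast$ annihilates every differential form of positive degree, and since each $p_i$ with $1\le i\le k$ is a positive integer, the boundary term $-(\varphi_0^\ast\omega_1)\wedge(\int\omega_2\ldots\omega_k)$ appearing in Theorem \ref{t4.9} drops out upon restriction to $\mathcal{P}_{x_1}M$.

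First I would record the restricted differentiation formula. Replacing $\mathcal{P}M$ by $\mathcal{P}_{x_1}M$ throughout Sections \ref{s3} and \ref{s4}, all the constructions ($(\omega)_t^\ast$, $\int_s^t$, the iterated integrals, and Propositions \ref{t3.4}, \ref{t4.2}, \ref{t4.3}) carry over verbatim, since they are built solely from $\varphi$ and its restriction. Thus Theorem \ref{t4.9} holds on $\mathcal{P}_{x_1}M$, and by the vanishing of the $\varphi_0$ term it becomes the simpler identity in which only three blocks survive, namely the $\mathbf{d}\omega_i$ sum, the $\omega_i\wedge\omega_{i+1}$ sum, and the single tail term $(-1)^{p_1+\ldots+p_{k-1}+k+1}(\int\omega_1\ldots\omega_{k-1})\wedge(\varphi_1^\ast\omega_k)$.

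Next I would apply the graded product rule $\mathbf{d}(\alpha\wedge\beta)=(\mathbf{d}\alpha)\wedge\beta+(-1)^{\deg\alpha}\,\alpha\wedge\mathbf{d}\beta$ with $\alpha=\int\omega_1\ldots\omega_k$, whose degree is $p_1+\ldots+p_k-k$, and $\beta=\varphi_1^\ast\omega_{k+1}$. The summand $(\mathbf{d}\alpha)\wedge\beta$ is expanded by the restricted formula of the previous step, and its tail term then combines through $\varphi_1^\ast\omega_k\wedge\varphi_1^\ast\omega_{k+1}=\varphi_1^\ast(\omega_k\wedge\omega_{k+1})$, since $\varphi_1^\ast$ is an algebra homomorphism. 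For the second Leibniz summand I would use that pullback commutes with exterior differentiation, $\mathbf{d}\varphi_1^\ast\omega_{k+1}=\varphi_1^\ast\mathbf{d}\omega_{k+1}$, together with the sign identity $(-1)^{p_1+\ldots+p_k-k}=(-1)^{p_1+\ldots+p_k+k}$. Matching the resulting blocks against the claimed expression is then purely a matter of sign bookkeeping.

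Finally, to obtain the subcomplex claim I would observe that every summand on the right-hand side is again of the generating shape $(\int\eta_1\ldots\eta_m)\wedge(\varphi_1^\ast\eta_{m+1})$: the entries $\mathbf{d}\omega_i$, $\omega_i\wedge\omega_{i+1}$ and $\mathbf{d}\omega_{k+1}$ are once more differential forms on $M$, while the last two blocks already carry a single $\varphi_1^\ast$ factor. Hence the graded submodule generated by such forms is closed under $\mathbf{d}$ and constitutes a subcomplex of $\mathcal{A}(\mathcal{P}_{x_1}M)$. I expect the only real friction to be the sign accounting and the degenerate boundary values of the indices (e.g.\ the empty iterated integral, read as the constant $1$, and the cases $k=1$ where one of the sums is void); these are governed by the same conventions already implicit in Theorem \ref{t4.9} and Corollary \ref{t4.9.1}.
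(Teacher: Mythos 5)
Your proposal is correct and is essentially the paper's own argument: the paper offers no separate computation for Theorem \ref{t5.1}, stating only that the results of Sections \ref{s3} and \ref{s4} carry over when $\mathcal{P}M$ is replaced by the based path space, and the intended reduction is exactly yours --- on $\mathcal{P}_{x_{1}}M$ the evaluation $\varphi_{0}$ is the constant map at $x_{1}$, so $\varphi_{0}^{\ast}$ annihilates the positive-degree form $\omega_{1}$, the boundary term of Theorem \ref{t4.9} disappears, and the graded Leibniz rule together with $\varphi_{1}^{\ast}\omega_{k}\wedge\varphi_{1}^{\ast}\omega_{k+1}=\varphi_{1}^{\ast}\left(  \omega_{k}\wedge\omega_{k+1}\right)$ yields the formula. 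One remark so that your final sign bookkeeping does not appear to fail: the summand $\left(  \int\omega_{1}...\omega_{k}\right)  \wedge\left(  \varphi_{1}^{\ast}\omega_{k+1}\right)$ printed as the first term of the right-hand side of Theorem \ref{t5.1} is a typographical error (its degree is one lower than that of the left-hand side; it is evidently a leftover of the term $\left(  \varphi_{0}^{\ast}\left(  \mathbf{d}\omega_{0}\right)  \right)  \wedge\cdots$ in Corollary \ref{t4.9.1}), and your derivation correctly produces the identity without it.
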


\begin{theorem}
\label{t5.2}Let $x_{2}\in M$. Given $\omega_{i}\in\mathcal{A}^{p_{i}}\left(
M\right)  $ ($0\leq i\leq k$)\ with $p_{i}$ being a positive integer ($1\leq
i\leq k$) and $p_{0}$\ being a non-negative integer, we have
\begin{align*}
& \mathbf{d}\left(  \left(  \varphi_{0}^{\ast}\omega_{0}\right)  \wedge\left(
\int\omega_{1}...\omega_{k}\right)  \right) \\
& =\left(  \varphi_{0}^{\ast}\left(  \mathbf{d}\omega_{0}\right)  \right)
\wedge\left(  \int\omega_{1}...\omega_{k}\right) \\
& +\sum_{i=1}^{k}\left(  -1\right)  ^{i+p_{0}+p_{1}+...+p_{i-1}}\left(
\varphi_{0}^{\ast}\omega_{0}\right)  \wedge\left(  \int\omega_{1}%
...\omega_{i-1}\left(  \mathbf{d}\omega_{i}\right)  \omega_{i+1}...\omega
_{k}\right) \\
& +\sum_{i=1}^{k-1}\left(  -1\right)  ^{i+p_{0}+p_{1}+...+p_{i}+1}\left(
\varphi_{0}^{\ast}\omega_{0}\right)  \wedge\left(  \int\omega_{1}%
...\omega_{i-1}\left(  \omega_{i}\wedge\omega_{i+1}\right)  \omega
_{i+2}...\omega_{k}\right) \\
& -\left(  -1\right)  ^{p_{0}}\left(  \varphi_{0}^{\ast}\left(  \omega
_{0}\wedge\omega_{1}\right)  \right)  \wedge\left(  \int\omega_{2}%
...\omega_{k}\right)
\end{align*}
so that the graded submodule of the de Rham module on $\mathcal{P}^{x_{2}}M$
linearly generated by differential forms of the form $\left(  \varphi
_{0}^{\ast}\omega_{0}\right)  \wedge\left(  \int\omega_{1}...\omega
_{k}\right)  $ is closed under exterior differentiation, constituting a
subcomplex of the de Rham complex $\mathcal{A}\left(  \mathcal{P}^{x_{2}%
}M\right)  $.
\end{theorem}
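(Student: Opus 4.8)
The plan is to reduce everything to the master formula of Theorem \ref{t4.9} together with the graded Leibniz rule, exploiting the single structural feature that distinguishes $\mathcal{P}^{x_2}M$ from the free path space $\mathcal{P}M$: the terminal-evaluation map $\varphi_1\colon\mathcal{P}^{x_2}M\to M$ is constant at $x_2$. First I would record the analogue of Theorem \ref{t4.9} valid on $\mathcal{P}^{x_2}M$. Running the derivations of Sections \ref{s3} and \ref{s4} verbatim with $\mathcal{P}M$ replaced by $\mathcal{P}^{x_2}M$, the only change is that every occurrence of $\varphi_1^*\omega$ with $\omega$ of positive degree now vanishes, since pullback along a constant map annihilates every differential form of positive degree. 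Consequently the final summand $(-1)^{p_1+\cdots+p_{k-1}+k+1}\left(\int\omega_1\cdots\omega_{k-1}\right)\wedge\left(\varphi_1^*\omega_k\right)$ of Theorem \ref{t4.9} drops out (note $p_k\geq1$), leaving on $\mathcal{P}^{x_2}M$ the simpler identity in which $\mathbf{d}\int\omega_1\cdots\omega_k$ is the sum of the two interior families (one with a $\mathbf{d}\omega_i$ inserted, one with an $\omega_i\wedge\omega_{i+1}$ contracted) together with the single boundary term $-\left(\varphi_0^*\omega_1\right)\wedge\left(\int\omega_2\cdots\omega_k\right)$.

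With this in hand, I would compute $\mathbf{d}\bigl(\left(\varphi_0^*\omega_0\right)\wedge\left(\int\omega_1\cdots\omega_k\right)\bigr)$ directly from the graded Leibniz rule. Writing $\alpha=\varphi_0^*\omega_0$, whose degree is $p_0$, and $\beta=\int\omega_1\cdots\omega_k$, we have
\[
\mathbf{d}(\alpha\wedge\beta)=(\mathbf{d}\alpha)\wedge\beta+(-1)^{p_0}\alpha\wedge\mathbf{d}\beta.
\]
For the first summand I use that pullback commutes with exterior differentiation, $\mathbf{d}\varphi_0^*\omega_0=\varphi_0^*\mathbf{d}\omega_0$, producing the leading term $\left(\varphi_0^*\left(\mathbf{d}\omega_0\right)\right)\wedge\left(\int\omega_1\cdots\omega_k\right)$. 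For the second summand I substitute the simplified formula for $\mathbf{d}\beta$ just obtained and distribute $(-1)^{p_0}\left(\varphi_0^*\omega_0\right)\wedge(-)$ across its three pieces.

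Tracking signs is then routine. Each factor $(-1)^{p_0}$ is absorbed into the exponents, turning $i+p_1+\cdots+p_{i-1}$ into $i+p_0+p_1+\cdots+p_{i-1}$ and $i+p_1+\cdots+p_i+1$ into $i+p_0+p_1+\cdots+p_i+1$, which reproduces the two interior sums of the asserted formula. The boundary term needs the one small algebraic observation that $\varphi_0^*$ is a homomorphism of exterior algebras, so $\left(\varphi_0^*\omega_0\right)\wedge\left(\varphi_0^*\omega_1\right)=\varphi_0^*\left(\omega_0\wedge\omega_1\right)$; combined with the sign $-(-1)^{p_0}$ this yields exactly $-(-1)^{p_0}\left(\varphi_0^*\left(\omega_0\wedge\omega_1\right)\right)\wedge\left(\int\omega_2\cdots\omega_k\right)$, the last line of the statement. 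Collecting the four contributions gives the displayed identity verbatim.

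Finally, closure is immediate from the shape of the right-hand side: every term is again of the generating form $\left(\varphi_0^*\mu\right)\wedge\left(\int\nu_1\cdots\nu_m\right)$ for suitable forms on $M$ — the leading and interior terms keep $\omega_0$ wedged on the left, while the boundary term replaces $\omega_0$ by $\omega_0\wedge\omega_1$ and drops $\omega_1$ from the iterated integral. Hence the graded submodule generated by such forms is carried into itself by $\mathbf{d}$, so it is a subcomplex of $\mathcal{A}\left(\mathcal{P}^{x_2}M\right)$. The only genuinely substantive point, and the one I would be most careful about, is the vanishing of $\varphi_1^*\omega_k$: this is the precise mirror of Theorem \ref{t5.1}, where fixing the starting point kills $\varphi_0^*$ and leaves a right-hand $\varphi_1^*$ factor, whereas here fixing the terminal point kills $\varphi_1^*$ and leaves a left-hand $\varphi_0^*$ factor. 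Everything else is bookkeeping driven by Theorem \ref{t4.9} and the Leibniz rule.
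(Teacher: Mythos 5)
Your proof is correct and takes essentially the same route as the paper, which states Theorem \ref{t5.2} without a written-out proof, deriving it by rerunning Sections \ref{s3} and \ref{s4} with $\mathcal{P}M$ replaced by the based path space, exactly as you do: the constancy of $\varphi_{1}$ on $\mathcal{P}^{x_{2}}M$ annihilates the $\varphi_{1}^{\ast}$ boundary term of Theorem \ref{t4.9} (since $p_{k}\geq 1$), and the graded Leibniz rule together with $\mathbf{d}\varphi_{0}^{\ast}=\varphi_{0}^{\ast}\mathbf{d}$ and the multiplicativity of $\varphi_{0}^{\ast}$ yields the displayed identity and the closure under $\mathbf{d}$. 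You have merely made explicit the sign bookkeeping and the one substantive vanishing argument that the paper leaves implicit.
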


\begin{theorem}
\label{t5.3}Let $x_{1},x_{2}\in M$. Given $\omega_{i}\in\mathcal{A}^{p_{i}%
}\left(  M\right)  $ ($1\leq i\leq k$)\ with $p_{i}$ being a positive integer
($1\leq i\leq k$), we have
\begin{align*}
& \mathbf{d}\int\omega_{1}...\omega_{k}\\
& =\sum_{i=1}^{k}\left(  -1\right)  ^{p_{1}+...+p_{i-1}+i}\int\omega
_{1}...\omega_{i-1}\left(  \mathbf{d}\omega_{i}\right)  \omega_{i+1}%
...\omega_{k}\\
& +\sum_{i=1}^{k-1}\left(  -1\right)  ^{p_{1}+...+p_{i}+i+1}\int\omega
_{1}...\omega_{i-1}\left(  \omega_{i}\wedge\omega_{i+1}\right)  \omega
_{i+2}...\omega_{k}%
\end{align*}
so that the graded submodule of the de Rham module on $\mathcal{P}_{x_{1}%
}^{x_{2}}M$ linearly generated by differential forms of the form $\int
\omega_{1}...\omega_{k}$ is closed under exterior differentiation,
constituting a subcomplex of the de Rham complex $\mathcal{A}\left(
\mathcal{P}_{x_{1}}^{x_{2}}M\right)  $.
\end{theorem}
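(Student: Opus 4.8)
The plan is to derive Theorem~\ref{t5.3} as a specialization of the general free-path-space formula in Theorem~\ref{t4.9}, exploiting the defining constraints of the doubly based path space $\mathcal{P}_{x_{1}}^{x_{2}}M$. The key observation is that on $\mathcal{P}_{x_{1}}^{x_{2}}M$, every path $\theta$ satisfies $\theta(0)=x_{1}$ and $\theta(1)=x_{2}$, both \emph{constant}. Consequently the maps $\varphi_{0}:\mathcal{P}_{x_{1}}^{x_{2}}M\rightarrow M$ and $\varphi_{1}:\mathcal{P}_{x_{1}}^{x_{2}}M\rightarrow M$ are constant maps (to $x_{1}$ and $x_{2}$ respectively), so that the pullbacks $\varphi_{0}^{\ast}\omega$ and $\varphi_{1}^{\ast}\omega$ vanish in positive degree for any positive-degree form $\omega$ on $M$. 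Since each $\omega_{i}$ has $p_{i}\geq 1$, the two boundary terms in Theorem~\ref{t4.9}---namely $-(\varphi_{0}^{\ast}\omega_{1})\wedge(\int\omega_{2}\dots\omega_{k})$ and $(-1)^{p_{1}+\dots+p_{k-1}+k+1}(\int\omega_{1}\dots\omega_{k-1})\wedge(\varphi_{1}^{\ast}\omega_{k})$---both pull back to zero.

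First I would state explicitly that restricting $\varphi$ to $I\times\mathcal{P}_{x_{1}}^{x_{2}}M$ makes $\varphi\circ\iota_{0}=\varphi_{0}$ and $\varphi\circ\iota_{1}=\varphi_{1}$ factor through the one-point inclusions $x_{1},x_{2}\hookrightarrow M$, hence $\varphi_{0}^{\ast}$ and $\varphi_{1}^{\ast}$ annihilate $\mathcal{A}^{p}(M)$ for every $p\geq 1$. Next I would invoke the particular ("$\int$" with all basepoints at $0$ and terminal point $1$) form of Theorem~\ref{t4.9} verbatim, observing that it remains valid after the restriction described in the paragraph preceding Theorem~\ref{t5.1} (everything in Sections~\ref{s3} and~\ref{s4} goes through with $\mathcal{P}M$ replaced by the based space). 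Then I would simply delete the two now-vanishing boundary summands, leaving exactly the two remaining sums indexed over interior differentials $\mathbf{d}\omega_{i}$ and over wedge-contractions $\omega_{i}\wedge\omega_{i+1}$, which is precisely the claimed formula; a trivial reindexing of the sign exponents from $i+p_{1}+\dots+p_{i-1}$ to $p_{1}+\dots+p_{i-1}+i$ matches the statement.

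Finally, to obtain the closure assertion, I would note that the right-hand side of the derived formula is itself a linear combination of iterated integrals of the same shape $\int\eta_{1}\dots\eta_{m}$ (with $m=k$ when a $\mathbf{d}\omega_{i}$ appears and $m=k-1$ when a contraction $\omega_{i}\wedge\omega_{i+1}$ appears), with each $\eta_{j}$ again a positive-degree form on $M$. Hence $\mathbf{d}$ maps the graded submodule generated by the $\int\omega_{1}\dots\omega_{k}$ into itself, so this submodule is a subcomplex of $\mathcal{A}(\mathcal{P}_{x_{1}}^{x_{2}}M)$.

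I expect the only genuine subtlety---rather than the routine sign-bookkeeping---to be justifying cleanly that all machinery of the earlier sections transfers to the based path space, i.e.\ that the Euclidean-module structure, the integral operators $\int_{s}^{t}$, the operator $\mathbf{D}_{t}$, and the Cartan calculus underlying Theorem~\ref{t4.9} are unaffected by passing from $M^{I}$ to the microlinear subspace $\mathcal{P}_{x_{1}}^{x_{2}}M=\{\theta\mid\theta(0)=x_{1},\,\theta(1)=x_{2}\}$. Once that transfer is granted (as the remark preceding Theorem~\ref{t5.1} asserts), the proof is a one-line consequence of degree-counting: positive-degree forms have vanishing pullback along constant maps.
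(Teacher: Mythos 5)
Your proposal is correct and follows essentially the same route as the paper: the paper's (implicit) proof is precisely to transfer the machinery of Sections \ref{s3} and \ref{s4} to the based path space, invoke the specialization of Theorem \ref{t4.9}, and observe that the two boundary terms vanish because $\varphi_{0}$ and $\varphi_{1}$ are constant maps on $\mathcal{P}_{x_{1}}^{x_{2}}M$, so their pullbacks annihilate the positive-degree forms $\omega_{1}$ and $\omega_{k}$. Your write-up simply makes explicit the degree-counting argument that the paper leaves implicit in the remark preceding Theorem \ref{t5.1}.
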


\begin{definition}
The subcomplex in the above theorem is called the bar complex associated with
the de Rham complex $\mathcal{A}\left(  M\right)  $.
\end{definition}

\end{document}